\journal{Journal of \LaTeX\ Templates}
\theoremstyle{plain}
\newtheorem{thm}{{\sc Theorem}}[section]
\newtheorem{lem}[thm]{\sc Lemma}
\newtheorem{cor}[thm]{\sc Corollary}
\newtheorem{defn}[thm]{\sc Definition}
\newtheorem{rmk}[thm]{\sc Remark}
\newcounter{num}
\renewcommand{\bar}{\overline}
\begin{document}

\begin{frontmatter}

\title{Inverse Relations in Shapiro's Open Questions\tnoteref{mytitlenote}}
\tnotetext[mytitlenote]{Research supported by Daegu University Research Grant 2013}


\author[daegu]{Ik-Pyo Kim\corref{mycorrespondingauthor}}
\ead{kimikpyo@daegu.ac.kr}
\cortext[mycorrespondingauthor]{Corresponding author}
\address[daegu]{Department of Mathematics Education,
     Daegu University, Gyeongbuk, 38453, Republic of Korea}


\author[washington]{Michael J. Tsatsomeros}
\address[washington]{ Department of Mathematics and Statistics,
     Washington State University, Pullman, WA 99164, USA}
\ead{tsat@math.wsu.edu}

\begin{abstract}
As an inverse relation, involution with an invariant sequence plays a key role in combinatorics and features prominently in some of Shapiro's open questions [L.W. Shapiro, Some open questions about random walks, involutions, limiting distributions and generating functions, Adv. Appl. Math. 27 (2001) 585-596]. In this paper, invariant sequences are used to provide answers to some of these questions about the Fibonacci matrix and Riordan involutions.
\end{abstract}
\begin{keyword}
Riordan matrix \sep Semi-Riordan matrix \sep Catalan number \sep Motzkin number
\MSC[2010] 15A18 \sep  11B39 \sep 05A15
\end{keyword}

\end{frontmatter}

\linenumbers

\section{Introduction}
\par
\setcounter{num}{1}
\setcounter{equation}{0}
Inverse relations play a pivotal role in many research topics in combinatorics \cite{Riordan}. Among the open questions posed by Shapiro \cite{Shapiroq},
{\bf Q2}, {\bf Q8}, and {\bf Q8.1} regard involutions as a trait of inverse relations and have been the research focus of several authors \cite{Cameron,Cheon1,Cheon2,Shapiroq}. The concept of a Riordan matrix and generalizations of the Pascal, Catalan, and Motzkin triangles \cite{Sprugnoli}
allow us in this paper to obtain in-depth answers to Shapiro's open questions above; our answers are naturally related to the Fibonacci matrix
and Riordan involutions.
\begin{defn}
	{\rm
\label{Riordan}
An infinite lower triangular matrix $R=(g(x), f(x))$ is a {\em Riordan matrix} provided that the
generating function of the $i$th column of $R$ is $g(x)f(x)^i$ for $i=0, 1, 2, \ldots$, where
$g(x)=g_0+g_1x+g_2x^2+\cdots$ and $f(x)=f_1x+f_2x^2+\cdots$ with $g_0 \neq 0$ and $f_1 \neq 0$.
Moreover, a Riordan matrix $R$ is called a {\em Riordan involution}
if $R^2=(1,x)$, and a {\em Riordan pseudo involution} if $(RD)^2=(1,x)$,
where $D={\rm diag}(1, -1, 1, -1, \ldots)$ \cite{Cheon2}. }
\end{defn}
The definition of Riordan (pseudo) involutions above is indeed facilitated by the fact that
the set of all Riordan matrices is a group under matrix multiplication, referred to as
the Riordan group \cite{Shapiro}, where multiplication amounts to
\begin{equation}
\label{b1}
(g(x),f(x))\;(h(x),l(x))\,=\,(g(x)h(f(x),l(f(x)).
\end{equation}

Shapiro's open questions mentioned above have long been of great interest in the investigation of
Riordan (pseudo) involutions, as well as the Riordan group; we restate them below, recalling that
$D={\rm diag}(1, -1, 1, -1, \ldots)$.

\begin{itemize}
  \item {\bf Q2}: Let
  \begin{equation*}
      {\mathbb F}=\left[\begin{array}{ccccccccc}
           ~ 1 & 0 & 0 & 0 & 0   &\cdots~\\
            ~0 & 1 & 0 & 0 & 0   &\cdots~\\
            ~0 & 1 & 1 & 0 & 0   &\cdots~\\
            ~0 & 0 & 2 & 1 & 0  & \cdots~\\
            ~0 & 0 & 1 & 3 & 1  &  \cdots~\\
             ~\vdots & \vdots & \vdots & \vdots &\vdots &  \ddots~
                   \end{array}\right]
\end{equation*}
denote the Fibonacci matrix \cite{Shapiroq}. Is there a combinatorial connection between ${\mathbb F}$ and $D{\mathbb F}^{-1}D$?
  \item {\bf Q8}  : Can every Riordan involution $R$ be written as $R=BDB^{-1}$ for some element $B$ in the Riordan group?
  \item {\bf Q8.1}: If a Riordan involution $R$ is of some particular combinatorial significance, can we find a $B$ in the
                    Riordan group, which has a related combinatorial significance and $R=BDB^{-1}$?
\end{itemize}


In \cite{Shapirosub}, several subgroups of the Riordan group are introduced. For example,
$\{(g(x), cx)|c \neq 0 \}$ and $\{(g(x), cxg(x))| c \neq 0\}$
are called the {\em $c$-Appell} and {\em $c$-Bell subgroups}, respectively.
In the case of $c=1$, these are simply called the {\em Appell} and {\em Bell subgroups}, respectively.
In \cite{Cheon1}, Cheon and Kim showed the existence of Riordan matrices $B$ as affirmative answers to {\bf Q8} and {\bf Q8.1}
by using an antisymmetric function and by adopting the Bell subgroup, respectively. In \cite{Cheon2}, Cheon et al. presented a pseudo involution
$R_n~(n=0, 1, \ldots)$ as a generalization of the RNA triangle such that for any generating function $G(x)$ with $G(0)\neq 0$
and for each nonnegative integer $n$,
$R_n=\left( g(x)\left( {{G(x)} \over {G(1-xg(x))}}\right)^n, xg(x)\right).$
The latter equality is a revised form of Cameron and Nkwanta's example in \cite{Cameron}, that is,
$W_n=\left( g(x)\left( {{1-x} \over {1-xg(x)}}\right)^n, xg(x)\right) \quad\mbox{with}\quad
       W_n=A^{-n}W_0A^n;$
the latter can be thought of as a partial answer to {\bf Q8},
where $g(x)={{(1-x+x^2)-\sqrt{(1-x+x^2)^2-4x^2}} \over 2x^2}$ and $A^n=\left({1 \over (1-x)^n}, x\right)$.

The shared notion of involutions as self-inverse relations in {\bf Q8} and {\bf Q8.1}, as well as the Fibonacci
and Catalan numbers, have been extensively studied. Little attention has been paid, however, to {\bf Q2}. In fact,
the row sums of ${\mathbb F}$ and $D{\mathbb F}^{-1}D$ are the Fibonacci and Catalan numbers, respectively. Among the objectives of our
research is to present plausible answers to {\bf Q2}, and also to give answers related to invariant sequences as self-inverse relations
\cite{Kim,Riordan,Wang} to {\bf Q8} and {\bf Q8.1}.
More specifically, in this paper, we investigate the structure of entries in $D{\mathbb F}^{-1}D$ and the role of $D{\mathbb F}^{-1}D$
in transforming invariant sequences, giving rise to answers for {\bf Q2}.
We also provide a method for constructing invariant sequences \cite{Kim} by means of Riordan (pseudo) involutions,
which allows us to answer both {\bf Q8} and {\bf Q8.1}.
\section{Notation and preliminaries}
\par
\setcounter{num}{2}
\setcounter{equation}{0}
We begin with a slight extension of the notion of Riordan matrix in Definition \ref{Riordan}.
\begin{defn} {\rm
\label{semi-Riordan}
An infinite lower triangular matrix $R$ is called a {\em semi-Riordan matrix} if the generating function of the $i$th column of $R$ is $g(x)f(x)^i$ for $i=0, 1, \ldots$,
where $g(x)=g_0+g_1x+g_2x^2+g_3x^3+\cdots$ and $f(x)=f_1x+f_2x^2+f_3x^3+\cdots$. For convenience, we will denote it by $R=(g(x),f(x))$
similarly to the Riordan matrix abbreviation, the distinction being clear from the context. }
\end{defn}
From now on, infinite real sequences $\{x_n\}$ are identified with the infinite dimensional real vector space $\mathbb{R}^\infty$ consisting of column vectors ${\mathbf x}=[x_0, x_1, \ldots]^T$.
For two semi-Riordan matrices $(g(x), f(x))$ and $(h(x), l(x))$, matrix multiplication is defined
as in the Riordan group. Matrix addition can be defined only when
$f(x)=l(x)$ as $(g(x), f(x))+((h(x), f(x))=(g(x)+h(x), f(x))$, because for all $u(x)$,
\begin{equation}
\label{b2}
\begin{split}
(g(x), f(x))u(x)+((h(x), f(x))u(x)&=g(x)u(f(x))+h(x)u(f(x))\\
                                  &=(g(x)+h(x))u(f(x))\\
                                   &=(g(x)+h(x), f(x))u(x),
\end{split}
\end{equation}
where $u(x)$ is the generating function of a column vector in $\mathbb{R}^\infty$.

We let $\mathbb{E}_\lambda (A)$ denote the eigenspace of a (finite or infinite)
matrix $A$ corresponding to its eigenvalue $\lambda$.
We generalize the notions of invariant and inverse invariant sequences of the first or second kind in \cite{Kim} as follows:
\begin{defn} {\rm
\label{Rinvseq}
For a Riordan involution (resp., pseudo involution) $R$, $\mathbf{x}\in \mathbb{R}^\infty$ is called
\begin{itemize}
\item [(a)]
an  {\em $R$-invariant sequence of the first kind} if $\mathbf{x} \in \mathbb{E}_{1} (R)$ ~(resp., $\mathbf{x} \in \mathbb{E}_{1} (RD)$).
\item [(b)]
an {\em inverse $R$-invariant sequence of the first kind} if $\mathbf{x} \in \mathbb{E}_{-1} (R)$ ~(resp., $\mathbf{x} \in \mathbb{E}_{-1} (RD)$).
\item [(c)]
an {\em $R$-invariant sequence of the second kind} if $\mathbf{x} \in \mathbb{E}_{1} (R^T)$ ~(resp., $\mathbf{x} \in \mathbb{E}_{1} (R^TD)$).
\item [(d)]
an {\em inverse $R$-invariant sequence of the second kind if} $\mathbf{x} \in \mathbb{E}_{-1} (R)$ ~(resp., $\mathbf{x} \in \mathbb{E}_{-1} (R^TD)$).
\end{itemize} }
\end{defn}

Let $P=\left[\begin{matrix}
         {i}\choose{j}
         \end{matrix} \right]$
$(i,j=0, 1, 2, \ldots)$ denote the (infinite) {\em Pascal matrix}.
Let $\mathbf{F}=[F_0, F_1, F_2, \ldots ]^T$ and $\mathbf{L}=[L_0, L_1, L_2, \ldots]^T$
denote the vectors in $\mathbb{R}^{\infty}$ whose entries are the members of the Fibonacci and Lucas
sequences, respectively; that is
\[ F_0=0, \; F_1=1, \; F_n=F_{n-1}+F_{n-2} \;\; (n\geq 2), \]
\[ L_0=2, \; F_1=1, \; L_n=L_{n-1}+L_{n-2} \;\; (n\geq 2). \]
$\mathbf{F}$ and $\mathbf{L}$ are $P$-invariant and inverse $P$-invariant sequences of the first kind, respectively,
as a pseudo involution $P$ \cite{Choi, Kim}.

For a constant $a$, we let $J(a)$ denote the infinite Jordan block of the form
\[
    \left[ \begin{array}{ccccccc}
          a & 1   &          &         &      & \\
            & a   &  1       &         &     O& \\
              &     &  \ddots &   \ddots &  & \\
              &     &         &  a & 1 &\\
              &  O  &         &   & a  &\ddots\\
              &     &          &         &  & \ddots
         \end{array} \right].
\]
It can be easily proven that $J(0)\mathbf{F}$ and $J(0)\mathbf{L}$ are $P$-invariant and inverse
$P$-invariant sequences of the second kind, respectively; see \cite{Kim}.

For a matrix $A$ with columns $A_j$ ($j=0,1, 2, \ldots$) and with ${\mathbf 0}_j$ denoting the vector of zeros
in $\mathbb{R}^j$, we let ${A\hspace{0.01cm}^\downarrow}$ denote the matrix whose $j$th column is
${\displaystyle
\left[\begin{matrix}
\mathbf 0_j\\
A_j
\end{matrix} \right]}$ where $\mathbf 0_0$ is vacuous.
The following result leads us to investigate the relationships between the Fibonacci matrix ${\mathbb F}$ and $D{\mathbb F}^{-1}D$ \cite{Kim}; the last two
clauses appeared in \cite{Choi}. Note that, by utilizing (\ref{b1}) and (\ref{b2}), we have expressed the result in terms of the semi-Riordan matrices.
\begin{lem}
\label{columns}
Let $P=\left({1 \over {1-x}}, {x \over {1-x}}\right)$, $D=(1, -x)$ and $Q=\left({{2-x} \over {1-x}}, {x \over {1-x}}\right)$. Then the following hold:
\begin{itemize}
  \item [(a)] The columns of $P^{T\downarrow}=(1, x(1+x))$ form a basis for $\mathbb{E}_1(P^TD)$.
  \item [(b)] The columns of $Q^{T\downarrow}(0|0)=(1+2x,x(1+x))$ form a basis for $\mathbb{E}_{-1}(P^TD)$.
  \item [(c)] The columns of
$\left[\begin{array}{c}
{\mathbf 0}^T\\
{P\hspace{0.01cm}^\downarrow}
\end{array} \right]=\left({x \over {1-x}}, {x^2 \over {1-x}}\right)$
form a basis for $\mathbb{E}_{-1}(PD)$.
  \item [(d)] The columns of
${Q\hspace{0.01cm}^\downarrow}=\left({{2-x} \over {1-x}}, {x^2 \over {1-x}}\right)$ form a basis for $\mathbb{E}_1(PD)$.
\end{itemize}
\end{lem}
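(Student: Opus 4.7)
The plan is to prove all four parts in parallel via a three-step template: (i) identify each matrix in semi-Riordan form $(g(x),f(x))$; (ii) verify the asserted eigenvalue column by column using the generating-function actions of $PD$ and $P^TD$; (iii) upgrade the column-by-column checks to basis statements using the involution identities $(PD)^2=(P^TD)^2=I$ together with the semi-Riordan structure.

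For step (i), the shift-down identity $R^\downarrow=(g(x),xf(x))$ applied to $R=(g(x),f(x))$ immediately gives $P^\downarrow=\bigl(\tfrac{1}{1-x},\tfrac{x^2}{1-x}\bigr)$, $Q^\downarrow=\bigl(\tfrac{2-x}{1-x},\tfrac{x^2}{1-x}\bigr)$, and, after prepending a zero row, $\bigl[\mathbf{0}^T;P^\downarrow\bigr]=\bigl(\tfrac{x}{1-x},\tfrac{x^2}{1-x}\bigr)$. Since the $j$-th row of $P$ has generating function $(1+x)^j$, the $j$-th column of $P^{T\downarrow}$ has generating function $x^j(1+x)^j$, so $P^{T\downarrow}=(1,x(1+x))$. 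A direct computation shows that the $j$-th row of $Q$ has generating function $(1+2x)(1+x)^{j-1}$ for $j\ge 1$, and removing the $0$-th row and column of $Q^{T\downarrow}$ yields $Q^{T\downarrow}(0|0)=(1+2x,x(1+x))$.

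For step (ii), the key generating-function actions are $(Dv)(x)=v(-x)$, $(PDv)(x)=\tfrac{1}{1-x}v\bigl(\tfrac{-x}{1-x}\bigr)$, and, formally, $(P^TDv)(x)=v(-1-x)$. The one-line identities $(-1-x)(-x)=x(1+x)$, $1+2(-1-x)=-(1+2x)$, and $1-\tfrac{-x}{1-x}=\tfrac{1}{1-x}$ drive the verifications: $(x(1+x))^j$ is fixed under $x\mapsto-1-x$ (part (a)), $(1+2x)(x(1+x))^j$ is negated (part (b)), and the substitution $x\mapsto\tfrac{-x}{1-x}$ fixes $\tfrac{x^2}{1-x}$, negates $\tfrac{x}{1-x}$, and sends $\tfrac{2-x}{1-x}$ to $2-x$, which, after multiplication by the prefactor $\tfrac{1}{1-x}$ of $PD$, produces the eigenvalues $-1$ and $+1$ in (c) and (d).

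For step (iii), in (c)--(d) the operator $PD$ is entry-wise defined on all of $\mathbb{R}^\infty$; the columns of $\bigl[\mathbf{0}^T;P^\downarrow\bigr]$ and $Q^\downarrow$ have leading terms $x^{2j+1}$ and $2x^{2j}$ respectively, so together they span $\mathbb{R}^\infty$ by forward substitution, and $(PD)^2=I$ gives $\mathbb{R}^\infty=\mathbb{E}_1(PD)\oplus\mathbb{E}_{-1}(PD)$, with the two spans identifying the two eigenspaces. For (a)--(b) the analogous argument takes place at the polynomial level via the algebraic decomposition $\mathbb{R}[x]=\mathbb{R}[x+x^2]\oplus(1+2x)\mathbb{R}[x+x^2]$, which holds because $x$ satisfies $x^2+x-y=0$ over $\mathbb{R}[y]$ with $y=x+x^2$; this decomposition matches the column spans of $P^{T\downarrow}$ and $Q^{T\downarrow}(0|0)$ exactly, and the involution $(P^TD)^2=I$ again isolates the two eigenspaces. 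The main obstacle I anticipate is the rigorous treatment of $P^T$: being upper-triangular, $P^TD$ is not entry-wise defined on all of $\mathbb{R}^\infty$, so the basis statements in (a)--(b) have to be interpreted within the formal Riordan-group calculus set up in (\ref{b1})--(\ref{b2}), even though the eigenvector checks in (ii) are genuinely entry-wise on the polynomial columns that appear.
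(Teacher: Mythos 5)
The paper never proves Lemma \ref{columns}: it is quoted from \cite{Kim} and \cite{Choi}, so there is no internal argument to measure you against, and your reconstruction has to stand on its own. Its computational core does: the identifications $P^{T\downarrow}=(1,x(1+x))$, $Q^{T\downarrow}(0|0)=(1+2x,x(1+x))$, and the two matrices in (c), (d) as $\bigl(\tfrac{x}{1-x},\tfrac{x^2}{1-x}\bigr)$ and $\bigl(\tfrac{2-x}{1-x},\tfrac{x^2}{1-x}\bigr)$ are all correct (your row generating function $(1+2x)(1+x)^{i-1}$ for the $i$th row of $Q$, $i\geq 1$, checks out since $Q_{ij}=\binom{i}{j}+\binom{i-1}{j-1}$), and the eigenvector verifications via $x\mapsto-1-x$ and $x\mapsto\tfrac{-x}{1-x}$ are right, up to one small slip: the substitution sends $\tfrac{x}{1-x}$ to $-x$, not to $-\tfrac{x}{1-x}$; it is only after the prefactor $\tfrac{1}{1-x}$ of $PD$ that the column generating function in (c) is negated. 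The basis argument for (c) and (d) is complete: $PD$ is row-finite, $(PD)^2=(1,x)$, the leading positions $2j$ and $2j+1$ interleave, and $PD$ commutes with the coordinatewise-finite infinite combinations used in the forward substitution, so each eigenspace is exactly the span (in that sense) of the corresponding family.

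The genuine gap is in the basis claims of (a) and (b). The decomposition $\mathbb{R}[x]=\mathbb{R}[x+x^2]\oplus(1+2x)\mathbb{R}[x+x^2]$ is correct, but it only describes polynomial sequences, whereas $\mathbb{E}_{\pm 1}(P^TD)$ is, by Definition \ref{Rinvseq} and the surrounding conventions, a subspace of $\mathbb{R}^\infty$. You cannot invoke for (a)--(b) the same notion of spanning you used for (c)--(d): the matrix $(1,x(1+x))$ is unit lower triangular and invertible (its inverse $\bigl(1,\tfrac{-1+\sqrt{1+4x}}{2}\bigr)$ is computed in Section 2), so its coordinatewise-convergent infinite combinations exhaust all of $\mathbb{R}^\infty$, which is far larger than an eigenspace. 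If instead one takes the eigenspace in the natural convergent sense (every row sum $\sum_j\binom{j}{i}(-1)^jx_j$ converges and equals $x_i$), it strictly exceeds the finite span of the columns: the coefficient sequence of $e^{x+x^2}$ decays superexponentially, the sums converge absolutely, and they reproduce the sequence because $e^{x+x^2}$ is invariant under $x\mapsto-1-x$, yet this sequence is not a finite combination of the columns of $(1,x(1+x))$. So neither reading your argument supplies makes (a)--(b) come out as stated; the spanning direction requires the precise definition of $\mathbb{E}_{\pm1}(P^TD)$ and of ``basis'' adopted in \cite{Kim} (a restricted domain or a summability convention), and the polynomial decomposition does not close it. Note also that the involution identity for $P^TD$ holds as a product of upper-triangular matrices, but it does not by itself give $\mathbb{R}^\infty=\mathbb{E}_1(P^TD)\oplus\mathbb{E}_{-1}(P^TD)$, since applying $P^TD$ twice to a general sequence requires interchanging conditionally convergent sums. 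You flag this obstacle yourself, but as it stands it is an acknowledged hole rather than a resolved step.
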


Surprisingly, $P^{T\downarrow}$ is equal to the Fibonacci matrix ${\mathbb F}$,
the columns of which form a basis for $\mathbb{E}_1(P^TD)$.
\begin{defn}
	{\rm
We call the four special matrices
$P^{T\downarrow}$, $Q^{T\downarrow}(0|0)$, $\left[\begin{array}{c}
{\mathbf 0}^T\\
{P\hspace{0.01cm}^\downarrow}
\end{array} \right]$, and ${Q\hspace{0.01cm}^\downarrow}$
the {\em Fibonacci matrix of the second kind}, the {\em Lucas matrix of the second kind}, the
{\em Fibonacci matrix of the first kind}, and the {\em Lucas matrix of the first kind}, and we will denote them by
${\mathbb F}^{\rm S}, {\mathbb L}^{\rm S}, {\mathbb F}^{\rm F}$, and ${\mathbb L}^{\rm F}$, respectively.}
\end{defn}

For each Riordan matrix $R=(g(x), f(x))$, we consider its inverse matrix given by
\begin{equation}
\label{b3}
R^{-1}=(1/g(\bar f(x)), \bar f(x)),
\end{equation}
where $\bar f(x)$ is the compositional inverse of $f(x)$,
i.e., $f(\bar f(x))=\bar f(f(x))=x$ \cite{Shapiro}.

For ${\mathbb F^{\rm S}}=(1, x(1+x))$ resp. ${\mathbb L^{\rm S}}=(1+2x, x(1+x))$, we can obtain  by (\ref{b3}) the well-known result that
${(\mathbb F^{\rm S})}^{-1}=(1, {{-1+ \sqrt{1+4x}}\over 2})$ resp. ${(\mathbb L^{\rm S})}^{-1}=({1 \over \sqrt{1+4x}}, {{-1+ \sqrt{1+4x}}\over 2})$.
Therefore,
\[
    {(\mathbb F^{\rm S})}^{-1}=\left[\begin{array}{ccccccccc}
           ~ 1 & 0 & 0 & 0 & 0  &   \cdots~\\
            ~0 & 1 & 0 & 0 & 0  &   \cdots~\\
            ~0 & -1 & 1 & 0 & 0 &   \cdots~\\
            ~0 & 2 & -2 & 1 & 0  &   \cdots~\\
            ~0 & -5 & 5 & -3 & 1 &   \cdots~\\
         ~\vdots & \vdots & \vdots & \vdots &\vdots & \ddots~
                   \end{array}\right],~
      {(\mathbb L}^{\rm S})^{-1}=\left[\begin{array}{ccccccccc}
           ~ 1 & 0 & 0 & 0 & 0  &   \cdots~\\
            ~-2 & 1 & 0 & 0 & 0  &   \cdots~\\
            ~6 & -3 & 1 & 0 & 0  &   \cdots~\\
            ~-20 & 10 & -4 & 1 & 0 &   \cdots~\\
            ~70 & -35 & 15 & -5 & 1 &   \cdots~\\
                ~\vdots & \vdots & \vdots & \vdots &\vdots & \ddots~
                   \end{array}\right].
\]
The relationships between these four special matrices and $D({\mathbb L}^{\rm S})^{-1}D$, as well as $D({\mathbb F}^{\rm S})^{-1}D$,
play a key role in our investigation.

Our effort in this paper will proceed as follows:

In section $3$, we show that
\begin{itemize}
  \item $D({{\mathbb F}^{\rm S}})^{-1}D=\left(1, xC(x)\right)=P\left({{M(x)-xM(x)-x^2M(x)^2} \over {1+x}}, {{x+x^2M(x)} \over {1+x}}\right)$,
  \item $D({{\mathbb L}^{\rm S}})^{-1}D=\left({1\over {1-2xC(x)}}, xC(x)\right)=\left(W(x), x\right)D({{\mathbb F}^{\rm S}})^{-1}D$
  \end{itemize}
by utilizing the Catalan and Motzkin numbers \cite{Donaghey}, where $C(x)={{1- \sqrt{1-4x}}\over 2x}$, $M(x)={{1-x- \sqrt{(1-x)^2-4x^2}}\over 2x^2}$, and $W(x)=\sum_{n=0}^{\infty}{\binom{2n}{n}}x^n$. In the process, we show that each entry of $D({\mathbb F}^{\rm S})^{-1}D$ can be expressed as a linear combination of Catalan numbers with coefficients connected to the Fibonacci sequence; and each entry of $D({{\mathbb L}^{\rm S}})^{-1}D$ can be expressed as a linear combination
of Catalan numbers with coefficients connected to the Fibonacci and Lucas sequences. This amounts to an answer to {\bf Q2}.
In section $4$, we show that $D(\mathbb F^{\rm S})^{-1}D$ and $D(\mathbb F^{\rm S})^{-1}D$ provide a mechanism (featuring the Catalan numbers) for transforming
$P$-invariant into an inverse $P$-invariant sequence of the second kind, and vice versa. This provides an answer to {\bf Q2}, as a combinatorial relationship between $\mathbb F^{\rm S}$ and $D(\mathbb F^{\rm S})^{-1}D$. We also present the relationships (featuring the Catalan and Motzkin numbers) between the generating functions of $P$-invariant and inverse $P$-invariant sequences of the second kind.
In section $5$, we provide a method for constructing $R$-invariant sequences of the first or second kind by means of the Riordan (pseudo) involution $R$ itself.
It follows that for every Riordan involution $R$ in the $(-1)$-Appell subgroup, there exist $B^n \,(n=1, 2, \ldots)$ in the Appel subgroup
such that $R=B^nDB^{-n}$, thus providing answers to {\bf Q8} and {\bf Q8.1}.

\section{The structure of $D({\mathbb F}^{\rm S})^{-1}D$ and $D({\mathbb L}^{\rm S})^{-1}D$}
\par
\setcounter{num}{3}
\setcounter{equation}{0}
In this section, we investigate the structure of the matrices $D({\mathbb F}^{\rm S})^{-1}D$ and $D({\mathbb L}^{\rm S})^{-1}D$, using the $n$th Catalan number
$C_n={1 \over n+1} \binom{2n}{n}$ and the Motzkin number $M_n=\sum_{k=0}^{\lfloor {n\over 2}\rfloor}{\binom{n}{2k}}C_k$ for $n=0, 1, 2, \ldots$ \cite{Donaghey}.
This contributes to the investigation of the structures of $D({\mathbb F}^{\rm S})^{-1}D$ and $D({\mathbb L}^{\rm S})^{-1}D$, leading to answers for {\bf Q2}.
From now on, $C(x)$ and $M(x)$ denote the generating functions of the Catalan numbers and Motzkin numbers, respectively.
\begin{lem}
\label{inverse}
Let ${\mathbb F}^{\rm S}$ and ${\mathbb L}^{\rm S}$ denote the Fibonacci and Lucas matrices of the second kind, respectively.
Let $P$ be the Pascal matrix. Then the following hold:
\begin{itemize}
  \item [(a)] $D({\mathbb F}^{\rm S})^{-1}D=\left(1, xC(x)\right)=P\left({{M(x)-xM(x)-x^2M(x)^2} \over {1+x}}, {{x+x^2M(x)} \over {1+x}}\right)$,
  \item [(b)] $D({\mathbb L}^{\rm S})^{-1}D=\left({1\over {1-2xC(x)}}, xC(x)\right)=\left(W(x), x\right)P\left({{M(x)-xM(x)-x^2M(x)^2} \over {1+x}}, {{x+x^2M(x)} \over {1+x}}\right)$,
  \item [(c)] $D({\mathbb L}^{\rm S})^{-1}D=\left({1\over {1-2xC(x)}}, x\right)D({\mathbb F}^{\rm S})^{-1}D=\left(W(x), x\right)D({\mathbb F}^{\rm S})^{-1}D=D({\mathbb F}^{\rm S})^{-1}D\left({1\over {1-2x}}, x\right)$,
\end{itemize}
where $C(x)={{1- \sqrt{1-4x}}\over 2x}$, $M(x)={{1-x- \sqrt{(1-x)^2-4x^2}}\over 2x^2}$, and $W(x)=\sum_{n=0}^{\infty}{\binom{2n}{n}}x^n$.
\end{lem}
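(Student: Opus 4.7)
The strategy is to verify each identity by routine Riordan algebra based on the multiplication rule (\ref{b1}) applied to the explicit inverses of $\mathbb F^{\rm S}$ and $\mathbb L^{\rm S}$ displayed just before the lemma, combined with three standard generating-function identities: $xC(x)=(1-\sqrt{1-4x})/2$, from which $C(x)=1+xC(x)^2$ and $1-2xC(x)=\sqrt{1-4x}$; the Motzkin functional equation $M(x)=1+xM(x)+x^2M(x)^2$; and the classical central-binomial identity $W(x)=1/\sqrt{1-4x}$.

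For the first equality of (a), I would compute $D(\mathbb F^{\rm S})^{-1}D=(1,-x)(1,\bar f(x))(1,-x)$ with $\bar f(x)=(-1+\sqrt{1+4x})/2$. Applying (\ref{b1}) twice yields $(1,-\bar f(-x))$, and the identity $-\bar f(-x)=(1-\sqrt{1-4x})/2=xC(x)$ completes this step.

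For the second equality of (a), set $h(x)=[M(x)-xM(x)-x^2M(x)^2]/(1+x)$ and $l(x)=[x+x^2M(x)]/(1+x)$, so that by (\ref{b1})
\[
P(h,l)=\left(\tfrac{1}{1-x}\,h\!\left(\tfrac{x}{1-x}\right),\ l\!\left(\tfrac{x}{1-x}\right)\right).
\]
The Motzkin equation collapses the numerator of $h$ to $1$, so $h(x)=1/(1+x)$; then $h(x/(1-x))=1-x$ and the first component is $1$. For the second component, the substitution $u=x/(1-x)$ gives $1+u=1/(1-x)$, whence $l(u)=(1-x)(u+u^2M(u))$; the explicit form of $M$ together with the radical simplification $(1-u)^2-4u^2=(1-4x)/(1-x)^2$ produces $l(x/(1-x))=(1-\sqrt{1-4x})/2=xC(x)$, as required.

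Parts (b) and (c) then follow with minimal extra work. The same Riordan product applied to $(\mathbb L^{\rm S})^{-1}=(1/\sqrt{1+4x},\bar f(x))$ yields $D(\mathbb L^{\rm S})^{-1}D=(1/\sqrt{1-4x},xC(x))$, and the chain $1/\sqrt{1-4x}=1/(1-2xC(x))=W(x)$ simultaneously handles the first equality of (b), its second equality (invoking part (a) for the Pascal factor), and the first two equalities of (c). The last equality of (c) is a one-line Riordan computation: $(1,xC(x))(1/(1-2x),x)=(1/(1-2xC(x)),xC(x))$ by (\ref{b1}). The only mildly technical point in the whole argument is the radical simplification under $u=x/(1-x)$ in part (a); every other identity reduces either to an algebraic manipulation of square roots or to the defining functional equation of $C$, $M$, or $W$.
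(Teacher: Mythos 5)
Your proposal is correct. For the first equalities in (a) and (b) and for all of (c) it coincides with the paper's proof: both compute $D(\mathbb F^{\rm S})^{-1}D=(1,xC(x))$ and $D(\mathbb L^{\rm S})^{-1}D=\bigl(1/\sqrt{1-4x},\,xC(x)\bigr)$ from (\ref{b1}) and (\ref{b3}), identify $1/\sqrt{1-4x}=W(x)=1/(1-2xC(x))$, and obtain (c) by one-line Riordan products. Where you genuinely diverge is the Pascal factorization in (a): you verify it directly, simplifying $h(x)=\bigl(M(x)-xM(x)-x^2M(x)^2\bigr)/(1+x)$ to $1/(1+x)$ via the Motzkin functional equation and then carrying out the radical computation $(1-u)^2-4u^2=(1-4x)/(1-x)^2$ under $u=x/(1-x)$ to get $l\!\left(\tfrac{x}{1-x}\right)=xC(x)$; your algebra checks out. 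The paper instead derives the factorization from Donaghey's Euler-transform identity $T\!\left(\tfrac{C(x)-1}{x}\right)=M(x)$, rewriting it as the Riordan identity (\ref{CMotzkin}) and composing with $P\left(\tfrac{1}{1+x},\tfrac{x}{1+x}\right)=(1,x)$ and $\left(\tfrac{C(x)-1}{x},x\right)\left(\tfrac{x}{C(x)-1},xC(x)\right)=(1,xC(x))$, using $\tfrac{x}{C(x)-1}=1-x-xC(x)$ and $C\!\left(\tfrac{x}{1+x}\right)=1+xM(x)$. Your route is more elementary and self-contained (only the closed forms and the quadratic equations for $C$ and $M$ are needed), but it only confirms the stated factorization; the paper's route explains where it comes from (the Catalan--Motzkin binomial/Euler transform) and, as a side benefit, establishes the relation reused later as (\ref{Euler}) in the proof of Corollary \ref{sol12}, so if you adopted your argument in the paper you would need to prove that relation separately there.
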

\begin{proof}
It directly follows from (\ref{b1}) and (\ref{b3}) that $D({\mathbb F}^{\rm S})^{-1}D=\left(1, {{1- \sqrt{1-4x}}\over 2}\right)$ and
$D({\mathbb L}^{\rm S})^{-1}D=\left({1\over {\sqrt{1-4x}}},{{1- \sqrt{1-4x}}\over 2}\right)$. Since $T({{C(x)-1} \over x})=M(x)$, which is the result in \cite{Donaghey}, we obtain
\begin{equation}
\label{CMotzkin}
\left({1 \over {1+x}}, {x \over {1+x}}\right)\left({{C(x)-1} \over x}, x\right)=\left(M(x), {x \over {1+x}}\right),
\end{equation}
where $T(f(x))={1 \over {1+x}}f({x\over {1+x}})$ is the Euler transformation and
$$M(x)={{1-x- \sqrt{(1-x)^2-4x^2}}\over 2x^2}.$$
By Newton's binomial theorem \cite{Brualdi,Comtet},
$(1-4x)^{-{1 \over 2}}=\sum_{n=0}^{\infty}{\binom{-{1 \over 2}}{n}}(-4x)^n=\sum_{n=0}^{\infty}{\binom{2n}{n}}x^n$
and ${{1- \sqrt{1-4x}}\over 2}=\sum_{n=0}^{\infty}{1 \over {n+1}}{\binom{2n}{n}}x^{n+1}$. Thus,
 $D({\mathbb F}^{\rm S})^{-1}D=\left(1, xC(x)\right)$ by $C(x)={{1- \sqrt{1-4x}}\over 2x}$ \cite{Shapiro}. Since $P\left({1 \over {1+x}}, {x \over {1+x}}\right)=(1,x)$ and $\left({{C(x)-1} \over x}, x\right)\left({x \over {C(x)-1}}, xC(x)\right)=(1, xC(x))$, by (\ref{CMotzkin}) we
 obtain
$$(1, xC(x))=P\left(M(x), {x \over {1+x}}\right)\left({x \over {C(x)-1}}, xC(x)\right),$$
which implies (a) from the fact that ${x \over {C(x)-1}}=1-x-xC(x)$ and $C({x\over {1+x}})=1+xM(x)$.

Clause (b) follows from (a) and $D({\mathbb L}^{\rm S})^{-1}D=(W(x), x)(1, xC(x))$ since $\sqrt{1-4x}=1-2xC(x)$.

Clause (c) is a consequence of  (a), (b), and $D({\mathbb L}^{\rm S})^{-1}D=(1, xC(x))\left({1\over {1-2x}}, x\right).$
\end{proof}
The following theorem contains the recurrence relations for the entries of
$D({\mathbb F}^{\rm S})^{-1}D$ and $D({\mathbb L}^{\rm S})^{-1}D$.
\begin{thm}
\label{structure}
Let $D({\mathbb F}^{\rm S})^{-1}D=[r_{ij}]$ and $D({\mathbb L}^{\rm S})^{-1}D=[q_{ij}]$ for $i$ and $j$ with $i, j=0, 1, 2, \ldots$. Then
\begin{itemize}
  \item [(a)] $r_{00}=1$ and for $i=1, 2, \ldots;~ j=2, 3, \ldots$,
\begin{equation}
\label{Fibore}
r_{i0}=0, \;\; r_{i1}={1\over i}{\binom{2i-2}{i-1}}, \;{\text and}~\; r_{ij}=-r_{i-1,j-2}+r_{i,j-1}.
\end{equation}

  \item [(b)] $q_{i0}={\binom{2i}{i}}$ for $i=0,1,2,\ldots$, $\;q_{i1}={1\over 2}{\binom{2i}{i}}$ for $i=1, 2, \ldots$, and
\begin{equation}
\label{Lucare}
q_{ij}=-q_{i-1,j-2}+q_{i,j-1}
\end{equation}
for $i=1, 2, \ldots;~ j=2, 3, \ldots$
\end{itemize}
\noindent
\end{thm}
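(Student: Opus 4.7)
The plan is to read all four assertions directly off the generating functions supplied by Lemma \ref{inverse}, using one key algebraic identity satisfied by $f(x):=xC(x)$. Since $C(x)=1+xC(x)^2$ (the defining quadratic for the Catalan series), multiplying by $x$ yields
\begin{equation*}
f(x)=x+xC(x)\cdot xC(x)=x+f(x)^2,
\end{equation*}
so $f(x)^2=f(x)-x$. Consequently, for every $j\ge 2$,
\begin{equation*}
f(x)^{j}=f(x)^{j-2}\cdot f(x)^2=f(x)^{j-2}\bigl(f(x)-x\bigr)=f(x)^{j-1}-x\,f(x)^{j-2}.
\end{equation*}
This single recurrence among columns is the engine that drives both parts.

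For part (a), Lemma \ref{inverse} gives $D({\mathbb F}^{\rm S})^{-1}D=(1,f(x))$, so column $j$ has generating function $f(x)^j$. Extracting the coefficient of $x^i$ from the column recurrence $f(x)^j=f(x)^{j-1}-x\,f(x)^{j-2}$, and noting that multiplication by $x$ shifts indices by one, immediately yields $r_{ij}=r_{i,j-1}-r_{i-1,j-2}$ for $i\ge 1$ and $j\ge 2$. The initial columns are read off by recalling $f(x)=\tfrac{1-\sqrt{1-4x}}{2}=\sum_{n\ge 0}C_n x^{n+1}$: column $0$ is $(1,0,0,\ldots)^T$, giving $r_{00}=1$ and $r_{i0}=0$, while column $1$ gives $r_{i1}=C_{i-1}=\tfrac1i\binom{2i-2}{i-1}$ for $i\ge 1$.

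For part (b), Lemma \ref{inverse} gives $D({\mathbb L}^{\rm S})^{-1}D=\bigl(\tfrac{1}{1-2f(x)},f(x)\bigr)$, so column $j$ has generating function $\dfrac{f(x)^j}{1-2f(x)}$. Dividing the identity $f(x)^{j}=f(x)^{j-1}-x\,f(x)^{j-2}$ by $1-2f(x)$ preserves the shape of the relation, and the same coefficient-comparison argument yields $q_{ij}=q_{i,j-1}-q_{i-1,j-2}$. For the initial data I will use $1-2f(x)=\sqrt{1-4x}$, which implies $\tfrac{1}{1-2f(x)}=(1-4x)^{-1/2}=W(x)=\sum\binom{2n}{n}x^n$; this directly gives $q_{i0}=\binom{2i}{i}$. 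The first column of $D({\mathbb L}^{\rm S})^{-1}D$ has generating function $\dfrac{f(x)}{1-2f(x)}=\dfrac{1-\sqrt{1-4x}}{2\sqrt{1-4x}}=\tfrac12 W(x)-\tfrac12$, yielding $q_{i1}=\tfrac12\binom{2i}{i}$ for $i\ge 1$.

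There is no serious obstacle; the only delicate point is recognizing that the recurrence $f^{j}=f^{j-1}-xf^{j-2}$ is exactly the translation, at the level of generating functions, of the combinatorial recurrence being claimed, and that the factor $1/(1-2f(x))$ in part (b) merely transports this relation from the $\mathbb{F}^{\rm S}$ setting to the $\mathbb{L}^{\rm S}$ setting without altering its form. The initial-column computations are short manipulations of $\sqrt{1-4x}$ and $C(x)=\tfrac{1-\sqrt{1-4x}}{2x}$ already invoked in Lemma \ref{inverse}.
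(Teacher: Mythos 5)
Your proposal is correct and follows essentially the same route as the paper: the paper's proof rests on exactly the identity $\left(\frac{1-\sqrt{1-4x}}{2}\right)^2=\frac{1-\sqrt{1-4x}}{2}-x$ (your $f(x)^2=f(x)-x$ for $f(x)=xC(x)$), applied columnwise with the extra factor $1/\sqrt{1-4x}=W(x)$ in the Lucas case, and the initial columns read off from Lemma \ref{inverse}. Your derivation of the key identity from $C(x)=1+xC(x)^2$ and your explicit first-column computations are correct and merely spell out details the paper leaves implicit.
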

\begin{proof}
Let $g(x)^{s-1}\left({{1- \sqrt{1-4x}}\over 2}\right)^j$ be the generating function of the $j$th column of $D({\mathbb F}^{\rm S})^{-1}D$ (when $s=1$, $g(x)=1$) or $D({\mathbb L}^{\rm S})^{-1}D$
(when $s=2$, $g(x)={1\over \sqrt{1-4x}}$). For $j\geq 2$,
\[
\begin{split}g(x)^{s-1}\left({{1- \sqrt{1-4x}}\over 2}\right)^j&=g(x)^{s-1}\left({{1- \sqrt{1-4x}}\over 2}\right)^{j-2}\left({{1- \sqrt{1-4x}}\over 2}-x\right)\\
&=g(x)^{s-1}\left({{1- \sqrt{1-4x}}\over 2}\right)^{j-1}- xg(x)^{s-1}\left({{1- \sqrt{1-4x}}\over 2}\right)^{j-2},
\end{split}
\]
which, along with Lemma \ref{inverse}, imply that (a) and (b) hold.
\end{proof}
To illustrate the above theorem, all the entries of $D({\mathbb F}^{\rm S})^{-1}D$ and $D({\mathbb L}^{\rm S})^{-1}D$
can be completely determined by the recurrence relations in Theorem \ref{structure}, once the entries in the first and second columns are determined;
indeed, we have
\[
      D({\mathbb F}^{\rm S})^{-1}D=\left[\begin{array}{ccccccccc}
           ~ 1 & 0 & 0 & 0 & 0  &   \cdots~\\
            ~0 & 1 & 0 & 0 & 0  &   \cdots~\\
            ~0 & 1 & 1 & 0 & 0  &   \cdots~\\
            ~0 & 2 & 2 & 1 & 0   &  \cdots~\\
            ~0 & 5 & 5 & 3 & 1  &   \cdots~\\
            ~\vdots & \vdots & \vdots & \vdots &\vdots &  \ddots~
                   \end{array}\right],~
      D({\mathbb L}^{\rm S})^{-1}D=\left[\begin{array}{ccccccccc}
           ~ 1 & 0 & 0 & 0 & 0  &   \cdots~\\
            ~2 & 1 & 0 & 0 & 0  &   \cdots~\\
            ~6 & 3 & 1 & 0 & 0  &   \cdots~\\
            ~20 & 10 & 4 & 1 & 0 &   \cdots~\\
            ~70 & 35 & 15 & 5 & 1 &   \cdots~\\
                ~\vdots & \vdots & \vdots & \vdots &\vdots & \ddots~
                   \end{array}\right].
\]

The following is a result about row sums of the matrices $D({\mathbb F}^{\rm S})^{-1}D$
and $D({\mathbb L}^{\rm S})^{-1}D$.
\begin{cor}
\label{rowsum}
Let $D({\mathbb F}^{\rm S})^{-1}D=[r_{ij}]$ and $D({\mathbb L}^{\rm S})^{-1}D=[q_{ij}]$ for $i$ and $j$ with $i, j=0, 1, 2,\ldots$.
Then for positive integers $i$ and $j$ with $i \geq j$, the following hold:
\begin{itemize}
  \item [(a)] $r_{i,i-j+1}=r_{i-1,i-1}+r_{i-1,i-2}+\cdots+r_{i-1,i-j}$.
  \item [(b)] $q_{i, i-j+1}=q_{i-1,i-1}+q_{i-1,i-2}+\cdots+q_{i-1,i-j}$.
  \end{itemize}
\end{cor}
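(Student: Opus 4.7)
My plan is to extract both parts of the corollary directly from the recurrence in Theorem \ref{structure} by iterating it along a right-to-left diagonal. Rewriting $r_{ij}=-r_{i-1,j-2}+r_{i,j-1}$ (valid for $i\geq 1$, $j\geq 2$) in the shifted form
\[
r_{i,k}=r_{i,k+1}+r_{i-1,k-1}\qquad(i\geq 1,\;k\geq 1)
\]
lets me move the column index of a row-$i$ entry upward at the cost of introducing a row-$(i-1)$ term.

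First I would start from $r_{i,i-j+1}$, which is legitimate because the hypothesis $i\geq j$ forces $i-j+1\geq 1$, and apply the shifted recurrence successively with $k=i-j+1,\,i-j+2,\ldots,\,i-1$. The intermediate row-$i$ contributions telescope, leaving
\[
r_{i,i-j+1}=r_{i,i}+\sum_{k=i-j}^{i-2}r_{i-1,k}.
\]
To convert this into the form of (a), I would invoke Lemma \ref{inverse}(a): since $D({\mathbb F}^{\rm S})^{-1}D=(1,xC(x))$ is a Riordan matrix whose $f$-series has leading coefficient $1$, its main diagonal is identically $1$, so $r_{i,i}=1=r_{i-1,i-1}$. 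Absorbing this unit into the sum yields precisely the claimed expression $r_{i,i-j+1}=r_{i-1,i-1}+r_{i-1,i-2}+\cdots+r_{i-1,i-j}$.

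Part (b) is handled by an identical argument. Theorem \ref{structure}(b) records the same recurrence for $q_{ij}$, and Lemma \ref{inverse}(b) identifies $D({\mathbb L}^{\rm S})^{-1}D=(1/\sqrt{1-4x},xC(x))$, again a Riordan matrix whose $f$-series has leading coefficient $1$, so $q_{i,i}=1=q_{i-1,i-1}$. The same telescoping then reproduces (b). The only point deserving any care is the range bookkeeping during the iteration: the shifted recurrence demands $k\geq 1$ at every step, and the hypothesis $i\geq j$ is exactly what keeps this valid throughout. Beyond that, I expect no real obstacle.
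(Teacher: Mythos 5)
Your proof is correct and takes essentially the paper's route: unrolling the recurrence of Theorem \ref{structure} along a row is exactly the paper's induction on $j$ made explicit, with the same range bookkeeping ensured by $i\geq j$. The only cosmetic difference is how you identify $r_{i,i}=r_{i-1,i-1}$ (and $q_{i,i}=q_{i-1,i-1}$): the paper deduces it from $r_{i,i+1}=0$ together with the recurrence, whereas you read off the constant unit diagonal from the Riordan forms $\left(1,xC(x)\right)$ and $\left(1/\sqrt{1-4x},xC(x)\right)$ (note this uses $g_0=1$ as well as $f_1=1$), which is equally valid.
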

\begin{proof}
Let $i$ and $j$ be positive integers with $i \geq j$.
We prove the result by induction on $j$ for $j \geq 1$. Since $r_{i, i+1}=0$ by the structure of
$D({\mathbb F}^{\rm S})^{-1}D$ and $r_{i, i+1}=-r_{i-1, i-1}+r_{i,i}$ by (\ref{Fibore}), we get that
$r_{i, i}=r_{i-1,i-1}$ for $j=1$. Let $j \geq 2$. Then by the induction hypothesis and (\ref{Fibore}) again,
\begin{equation}
\label{suminrows1}
r_{i,i-j+2}=r_{i-1,i-1}+r_{i-1,i-2}+\cdots+r_{i-1,i-j+1}
\end{equation}
and $r_{i, i-j+2}=-r_{i-1, i-j}+r_{i-1,i-j+1}$. Thus we get the result by replacing $r_{i, i-j+2}$ with
$-r_{i-1, i-j}+r_{i-1,i-j+1}$ in (\ref{suminrows1}).
The second part of the corollary can be proven similarly.
\end{proof}
Let ${\mathbf e}_i$  $(i=0, 1, 2, \ldots)$ denote the $i$th column of the identity matrix $I$,
and let ${\mathbf e}={\mathbf e}_0+{\mathbf e}_1+{\mathbf e}_2+\cdots$.
Then we directly obtain the following result from Corollary \ref{rowsum}.
\begin{cor}
\label{suminrows2}
Let ${\mathbb F}^{\rm S}$ and ${\mathbb L}^{\rm S}$ denote the Fibonacci and Lucas matrices of the second kind, respectively. Then the following hold:
\begin{itemize}
  \item [(a)] $D({\mathbb F}^{\rm S})^{-1}D {\mathbf e}=[C_0, C_1, C_2, \ldots, C_n, \ldots]^T$.
  \item [(b)] $D({\mathbb L}^{\rm S})^{-1}D {\mathbf e}=[C_0, 3C_1, 5C_2, \ldots, (2n+1)C_n, \ldots]^T$.
  \end{itemize}
\end{cor}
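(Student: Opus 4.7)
The plan is to interpret multiplication by $\mathbf{e}$ as producing the vector of row sums, and then to realize each row sum as a single entry in column $1$ of the matrix, using the telescoping identity provided by Corollary \ref{rowsum}. Since both $D(\mathbb{F}^{\rm S})^{-1}D$ and $D(\mathbb{L}^{\rm S})^{-1}D$ are lower triangular, the $i$th entry of $D(\mathbb{F}^{\rm S})^{-1}DD\mathbf{e}$ equals $\sum_{k=0}^{i}r_{i,k}$, and similarly for the Lucas side.

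The key observation is that Corollary \ref{rowsum}(a), applied with the index pair $(i+1,i+1)$ (which satisfies $i+1\geq i+1$), gives
\[
r_{i+1,1}\;=\;r_{i,i}+r_{i,i-1}+\cdots+r_{i,0},
\]
so that the row-$i$ sum is exactly $r_{i+1,1}$. By Theorem \ref{structure}(a), $r_{i+1,1}=\tfrac{1}{i+1}\binom{2i}{i}=C_i$, which is precisely what clause (a) asserts. The same reindexing applied to Corollary \ref{rowsum}(b) yields row-$i$ sum equal to $q_{i+1,1}$, and by Theorem \ref{structure}(b) this equals $\tfrac{1}{2}\binom{2(i+1)}{i+1}$.

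For clause (b) it then remains to record the simple identity
\[
\tfrac{1}{2}\binom{2i+2}{i+1}\;=\;\tfrac{(2i+2)(2i+1)}{2(i+1)^2}\binom{2i}{i}\;=\;(2i+1)\,\tfrac{1}{i+1}\binom{2i}{i}\;=\;(2i+1)C_i,
\]
after which the two row-sum formulas assemble into the claimed vectors. The zeroth entries ($i=0$) are checked by hand: the first row of $D(\mathbb{F}^{\rm S})^{-1}D$ is $(1,0,0,\ldots)$ and the first row of $D(\mathbb{L}^{\rm S})^{-1}D$ is $(1,0,0,\ldots)$, giving $C_0=1$ and $(2\cdot 0+1)C_0=1$, respectively.

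There is no genuine obstacle here; the only thing to be careful about is the index shift, in that Corollary \ref{rowsum} summarizes the full row-$i$ sum not as an entry of row $i$ itself but as entry $(i+1,1)$. Once that is clearly articulated, the rest is a one-line application of the explicit column-$1$ values from Theorem \ref{structure} together with the binomial simplification above.
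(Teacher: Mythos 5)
Your proof is correct and takes essentially the same route as the paper: both reduce the row-$i$ sum to the single column-$1$ entry $r_{i+1,1}$ (resp.\ $q_{i+1,1}$) via the $j=i$ case of Corollary \ref{rowsum}, then insert the explicit values $r_{i+1,1}=\tfrac{1}{i+1}\binom{2i}{i}$ and $q_{i+1,1}=\tfrac12\binom{2i+2}{i+1}=(2i+1)C_i$ from Theorem \ref{structure}. (The expression ``$D(\mathbb{F}^{\rm S})^{-1}DD\mathbf{e}$'' has a stray $D$, but this is clearly a typo, since you immediately define the $i$th entry as the row sum $\sum_{k=0}^{i}r_{i,k}$.)
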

\begin{proof}
Let $D({\mathbb F}^{\rm S})^{-1}D=[r_{ij}]$ and $D({\mathbb L}^{\rm S})^{-1}D=[q_{ij}]$ for $i, j=0, 1,\ldots\,$
For positive integers $i$ and $j$ with $i \geq j$, it follows from the case of $j=i$ in Corollary \ref{rowsum} that
\begin{equation}
\begin{split}
r_{i1}=r_{i-1,0}+r_{i-1,1}+\cdots+r_{i-1,i-1},\\
q_{i1}=q_{i-1,0}+q_{i-1,1}+\cdots+q_{i-1,i-1},\\
\end{split}
\end{equation}
which imply that $r_{i1}$ and $q_{i1}$ are the $(i-1)$th row sums with $r_{i1}={1\over i}{\binom{2i-2}{i-1}}$ and $q_{i1}={1\over 2}{\binom{2i}{i}}$.
Thus for $i=0,1, 2, \ldots$, the $i$th row sums of $D({\mathbb F}^{\rm S})^{-1}D$ resp. $D({\mathbb L}^{\rm S})^{-1}D$ are ${1\over {i+1}}{\binom{2i}{i}}$ resp. ${(2i+1)\over (i+1)}{\binom{2i}{i}}$, and (a) and (b) is proven.
\end{proof}

We now present one of our main results as an answer to {\bf Q2}. It provides a combinatorial relationship between ${\mathbb F}^{\rm S}$ and
$D({\mathbb F}^{\rm S})^{-1}D$ and furthermore, between ${\mathbb L}^{\rm S}$ and $D({\mathbb L}^{\rm S})^{-1}D$.
\begin{thm}
\label{FiboLucaentry}
Let ${\mathbb F}^{\rm S}$ and ${\mathbb L}^{\rm S}$ denote the Fibonacci and Lucas matrices of the second kind, respectively. Then the following hold:
\begin{itemize}
  \item [(a)] $D({\mathbb F}^{\rm S})^{-1}D {\mathbf e}_0=[C_0, 0, 0, \ldots]^T$; if $j\geq 1$, then $D({\mathbb F}^{\rm S})^{-1}D {\mathbf e}_j=[r_{0j}, r_{1j}, \ldots, r_{nj}, \ldots]^T$
  where
 $$r_{nj}=\left\{
 \begin{array}{cc}
 {\binom{j-1}{0}}C_{n-1}-{\binom{j-2}{1}}C_{n-2}+\cdots+(-1)^{\lfloor{{{j-1}\over 2}\rfloor}}{\binom{\lceil{{j-1}\over 2}\rceil}{\lfloor{{j-1}\over 2}\rfloor}}C_{n-\lceil{{{j}\over 2}\rceil}}, & \mbox{\;\;if\;\;} n\geq j,\\
 0, & \mbox\;{otherwise.}
 \end{array}\right.
 $$

  \item [(b)] $D({\mathbb L}^{\rm S})^{-1}D {\mathbf e}_0=[C_0, 2C_1, 3C_2, \ldots]^T$;
  if $j\geq 1$, then $D({\mathbb L}^{\rm S})^{-1}D {\mathbf e}_j=[q_{0j}, q_{1j}, \ldots, q_{nj}, \ldots]^T$,
  where for $n \geq 1$, $q_{n1}=(2n-1)C_{n-1}$ and for $j\geq 2$ with $n \geq j$,
\begin{equation}
  \label{Lucarel}
\begin{split}
q_{nj}&=(n-1)C_{n-1}-\left[(n-2)\left({\binom{j-2}{1}}+{\binom{j-3}{0}}\right)+{\binom{j-3}{0}}\right]C_{n-2}\\
&+\left[(n-3)\left({\binom{j-3}{2}}+{\binom{j-4}{1}}\right)+{\binom{j-4}{1}}\right]C_{n-3}+\cdots\\
&+(-1)^{\lfloor{{{j-1}\over 2}\rfloor}}\left[\left(n-\lceil{{{j}\over 2}\rceil}\right)\left({\binom{\lceil{{j-1}\over 2}\rceil}{\lfloor{{j-1}\over 2}\rfloor}}+{\binom{\lceil{{j-1}\over 2}\rceil-1}{\lfloor{{j-1}\over 2}\rfloor-1}}\right)+{\binom{\lceil{{j-3}\over 2}\rceil}{\lfloor{{j-3}\over 2}\rfloor}}\right]C_{n-{\lceil{{{j}\over 2}\rceil}}}.
\end{split}
\end{equation}
\end{itemize}
\end{thm}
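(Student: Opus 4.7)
The plan is to prove both parts by induction on the column index $j$, using the column recurrences in Theorem \ref{structure} together with Pascal's identity.

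For part (a), Lemma \ref{inverse}(a) implies that $r_{nj}=[x^n](xC(x))^j$ vanishes for $n<j$, so the formula needs to be verified only for $n\ge j$. The base case $j=1$ is immediate since $r_{n1}=\tfrac{1}{n}\binom{2n-2}{n-1}=C_{n-1}=\binom{0}{0}C_{n-1}$. For the inductive step, assume the claim holds in columns $j-1$ and $j-2$ and apply $r_{nj}=r_{n,j-1}-r_{n-1,j-2}$. Substituting the hypothesized expressions, reindexing $k\mapsto k-1$ in the second sum so that both sums are written in terms of $C_{n-1-k}$, and combining via Pascal's identity
\begin{equation*}
\binom{j-2-k}{k}+\binom{j-2-k}{k-1}=\binom{j-1-k}{k}
\end{equation*}
produces the claimed coefficient. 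A brief case analysis on the parity of $j$ confirms the upper limit $\lfloor(j-1)/2\rfloor$ in the sum.

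For part (b), the same strategy applies but with heavier bookkeeping, since each Catalan coefficient is now a sum of an $n$-linear piece and a constant correction. The base cases are $q_{n0}=\binom{2n}{n}$ (from Theorem \ref{structure}(b)) and $q_{n1}=\tfrac{1}{2}\binom{2n}{n}=(2n-1)C_{n-1}$, an elementary Catalan identity. For $j\ge 2$ the recurrence $q_{nj}=q_{n,j-1}-q_{n-1,j-2}$ is applied as before; the key subtlety is that when $q_{n-1,j-2}$ is re-expressed in the $C_{n-1-k}$ basis, the linear coefficient $(n-2)-k=(n-1-k)-1$ feeds both the $n$-linear part and, through the extra $-1$, the constant correction. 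This is precisely the source of the term $\binom{j-2-k}{k-1}$ sitting outside the parenthesis in (\ref{Lucarel}), and Pascal's identity is then invoked separately on the $n$-linear coefficient and on the constant correction.

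The main obstacle is the bookkeeping in (b): two Pascal-style identities must be combined at every step, and one must track how the $n$-linear factor propagates through the shift $n\mapsto n-1$ forced by the recurrence. A useful conceptual shortcut is the observation that $W(x)=\frac{d}{dx}(xC(x))$, which yields $q_{nj}=\frac{n+1}{j+1}r_{n+1,j+1}$; this reduces (b) to (a) plus a purely algebraic rearrangement and, as a by-product, produces the compact closed form $q_{nj}=\binom{2n-j}{n-j}$. The explicit Catalan-number expansion stated in (b), however, is most transparent through the direct induction outlined above.
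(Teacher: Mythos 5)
Your proposal is correct and follows essentially the same route as the paper: induction on the column index $j$ via the recurrences of Theorem \ref{structure} combined with Pascal's identity, with the low columns checked directly and a parity split on $j$, exactly as in the paper's proof of both (a) and (b). Two small remarks: in (b) the bookkeeping is slightly simpler than you describe, since after the reindexing $k\mapsto k+1$ the linear factors $n-1-k$ align exactly and the constant corrections $\binom{j-2-k}{k-1}$ combine by their own Pascal identity (no ``extra $-1$'' is needed), while your observation $W(x)=\frac{d}{dx}\bigl(xC(x)\bigr)$, giving $q_{nj}=\frac{n+1}{j+1}r_{n+1,j+1}=\binom{2n-j}{n-j}$, is a genuinely nice closed form absent from the paper --- though passing from it to the specific expansion (\ref{Lucarel}) requires more than a trivial rearrangement, so your direct induction remains the right primary argument.
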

\begin{proof}
\noindent
(a) The proof proceeds by induction on $j$ with $j\geq 1$.
For $j=1$ and $2$, we know that the generating function for the $j$th column
of $D({\mathbb F}^{\rm S})^{-1}D$ is $xC(x)$ if $j=1$
and $x^2C(x)^2$ if $j=2$ (see Lemma \ref{inverse} (a)). Since $C_{n+1}=\sum_{k=0}^nC_kC_{n-k}~ (n \geq 0)$ \cite{Brualdi}, $$x^2C(x)^2=x^2\sum_{n=0}^{\infty}(C_0C_{n}+C_1C_{n-1}+\cdots+C_nC_0)x^n=x^2\sum_{n=0}^{\infty}C_{n+1}x^n=\sum_{n=2}^{\infty}C_{n-1}x^n.$$
Thus we have $r_{n1}=C_{n-1}$ for $n \geq 1$ and $r_{n2}=C_{n-1}$ for $n \geq 2$, and we can commence the induction. Let $j \geq 3$. If $j$ is odd,
then by the induction hypothesis, we have
\begin{equation}
\label{rowsum4}
\begin{split}
r_{n-1,j-2}&={\binom{j-3}{0}}C_{n-2}+\cdots+(-1)^{{j-3 \over 2}-1}{\binom{{j-1\over 2}}{{j-3\over 2}-1}}C_{n-{j-1\over 2}}+(-1)^{{j-3 \over 2}}{\binom{{j-3\over 2}}{{j-3\over 2}}}C_{n-1-{j-1\over 2}},\\
r_{n,j-1}&={\binom{j-2}{0}}C_{n-1}-{\binom{j-3}{1}}C_{n-2}+\cdots+(-1)^{{j-3 \over 2}}{\binom{{j-1\over 2}}{{j-3\over 2}}}C_{n-{j-1\over 2}}.
\end{split}
\end{equation}
By (\ref{Fibore}) applied to (\ref{rowsum4}),
\begin{equation}
\label{rowsum5}
r_{n,j}={\binom{j-1}{0}}C_{n-1}-{\binom{j-2}{1}}C_{n-2}+\cdots+(-1)^{{j-3 \over 2}}{\binom{{j+1\over 2}}{{j-3\over 2}}}C_{n-{j-1\over 2}}+(-1)^{{j-1 \over 2}}{\binom{{j-1\over 2}}{{j-1\over 2}}}C_{n-{j+1\over 2}},
\end{equation}
from which the result follows. If $j$ is even, the result can be proven similarly.

\noindent
(b) For each positive integer $n \geq 1$, by Theorem \ref{structure} we have
$$q_{n1}={1\over 2}{\binom{2n}{n}}={{2n(2n-1)}\over n(2n)}{(2n-2)! \over {(n-1)!(n-1)!}}=(2n-1)C_{n-1}.$$
For each $j\geq 2$ with $n \geq j$, we prove the result by induction on $j$.
By (\ref{Lucare}),
$$q_{n2}=(2n-1)C_{n-1}-nC_{n-1}=(n-1)C_{n-1}\quad\mbox{and}\quad q_{n3}=(n-1)C_{n-1}-(2n-3)C_{n-2},$$
which is the result (\ref{Lucarel}) for $j=2$ and $3$ with $n \geq j$. The induction commences:
Assume that $j\geq 4$ is even. 
Then by the induction hypothesis,
\begin{equation}
\label{Lurowsum1}
\begin{split}
q_{n-1,j-2}&=(n-2)C_{n-2}-\left[(n-3)\left({\binom{j-4}{1}}+{\binom{j-5}{0}}\right)+{\binom{j-5}{0}}\right]C_{n-3}\\
&+\left[(n-4)\left({\binom{j-5}{2}}+{\binom{j-6}{1}}\right)+{\binom{j-6}{1}}\right]C_{n-4}+\cdots\\
&+(-1)^{{j-4}\over 2}\left[\left(n-{{j}\over 2}\right)\left({\binom{{{j-2}\over 2}}{{{j-4}\over 2}}}+{\binom{{{j-2}\over 2}-1}{{{j-4}\over 2}-1}}\right)+{\binom{{{j-4}\over 2}}{{{j-6}\over 2}}}\right]C_{n-{{{{j}\over 2}}}},
\end{split}
\end{equation}
\begin{equation}
\label{Lurowsum2}
\begin{split}
q_{n,j-1}&=(n-1)C_{n-1}-\left[(n-2)\left({\binom{j-3}{1}}+{\binom{j-4}{0}}\right)+{\binom{j-4}{0}}\right]C_{n-2}\\
&+\left[(n-3)\left({\binom{j-4}{2}}+{\binom{j-5}{1}}\right)+{\binom{j-5}{1}}\right]C_{n-3}+\cdots\\
&+(-1)^{{{{j-2}\over 2}}}\left[\left(n-{{{j}\over 2}}\right)\left({\binom{{{j-2}\over 2}}{{{j-2}\over 2}}}+{\binom{{{j-2}\over 2}-1}{{{j-2}\over 2}-1}}\right)+{\binom{{{j-4}\over 2}}{{{j-4}\over 2}}}\right]C_{n-{{{{j}\over 2}}}}.
\end{split}
\end{equation}
\newline
Once more, by (\ref{Lucare}), as well as (\ref{Lurowsum1}) and (\ref{Lurowsum2}), it follows that
\begin{equation}
\label{Lurowsum3}
\begin{split}
q_{n,j}&=(n-1)C_{n-1}-\left[(n-2)\left({\binom{j-2}{1}}+{\binom{j-3}{0}}\right)+{\binom{j-3}{0}}\right]C_{n-2}\\
&+\left[(n-3)\left({\binom{j-3}{2}}+{\binom{j-4}{1}}\right)+{\binom{j-4}{1}}\right]C_{n-3}+\cdots\\
&+(-1)^{{{{j-2}\over 2}}}\left[\left(n-{{{j}\over 2}}\right)\left({\binom{{{j}\over 2}}{{{j-2}\over 2}}}+{\binom{{{j}\over 2}-1}{{{j-2}\over 2}-1}}\right)+{\binom{{{j-2}\over 2}}{{{j-4}\over 2}}}\right]C_{n-{{{{j}\over 2}}}},\\
\end{split}
\end{equation}
which is the desired result (\ref{Lucarel}). The case of odd $j$ can be proven similarly.
\end{proof}
Theorem \ref{FiboLucaentry} says that for a pair $n$ and $j$ of positive integers with $n \geq j$, each entry of
$D({\mathbb F}^{\rm S})^{-1}D$ and $D({\mathbb L}^{\rm S})^{-1}D$ can be expressed as a linear combination of Catalan numbers with coefficients
such that
$$r_{nj}={\binom{j-1}{0}}C_{n-1}-{\binom{j-2}{1}}C_{n-2}+\cdots+(-1)^{\lfloor{{{j-1}\over 2}\rfloor}}{\binom{\lceil{{j-1}\over 2}\rceil}{\lfloor{{j-1}\over 2}\rfloor}}C_{n-\lceil{{{j}\over 2}\rceil}},$$
where ${\binom{j-1}{0}}+{\binom{j-2}{1}}+\cdots+{\binom{\lceil{{j-1}\over 2}\rceil}{\lfloor{{j-1}\over 2}\rfloor}}$
is the $j$th Fibonacci number $F_j$ for $j\geq 1$ and
\begin{eqnarray*}
q_{nj}&=&(n-1){\binom{j-1}{0}}C_{n-1}-[(n-2)({\binom{j-2}{1}}+{\binom{j-3}{0}})+{\binom{j-3}{0}}]C_{n-2}
+[(n-3)({\binom{j-3}{2}}\\
&+& {\binom{j-4}{1}})+{\binom{j-4}{1}}]C_{n-3} +  \cdots
+(-1)^{\lfloor{{{j-1}\over 2}\rfloor}}[(n-\lceil{{{j}\over 2}\rceil})({\binom{\lceil{{j-1}\over 2}\rceil}{\lfloor{{j-1}\over 2}\rfloor}}+{\binom{\lceil{{j-1}\over 2}\rceil-1}{\lfloor{{j-1}\over 2}\rfloor-1}})\\
&+&{\binom{\lceil{{j-3}\over 2}\rceil}{\lfloor{{j-3}\over 2}\rfloor}}]C_{n-{\lceil{{{j}\over 2}\rceil}}},
\end{eqnarray*}
where ${\binom{j-1}{0}}+({\binom{j-2}{1}}+{\binom{j-3}{0}})+({\binom{j-3}{2}}+{\binom{j-4}{1}})+\cdots+({\binom{\lceil{{j-1}\over 2}\rceil}{\lfloor{{j-1}\over 2}\rfloor}}+{\binom{\lceil{{j-1}\over 2}\rceil-1}{\lfloor{{j-1}\over 2}\rfloor-1}})$ is the $(j-1)$th Lucas number $L_{j-1}$ and ${\binom{j-3}{0}}+{\binom{j-4}{1}}+\cdots+{\binom{\lceil{{j-3}\over 2}\rceil}{\lfloor{{j-3}\over 2}\rfloor}}$ is the $(j-2)$th Fibonacci number $F_{j-2}$ for $j\geq 3$.

\noindent
For example, it follows from Theorem \ref{FiboLucaentry} that $D({\mathbb F}^{\rm S})^{-1}D$ and $D({\mathbb L}^{\rm S})^{-1}D$ can be expressed
in terms of the Catalan numbers such that each entry of the matrices entails the Fibonacci or Lucas sequences:
\[
      D({\mathbb F}^{\rm S})^{-1}D=\left[\begin{array}{ccccccccc}
           ~ C_0 & 0   & 0   & 0       & 0        &    \cdots~\\
            ~0   & C_0 & 0   & 0       & 0        &    \cdots~\\
            ~0   & C_1 & C_1 & 0       & 0        &    \cdots~\\
            ~0   & C_2 & C_2 & C_2-C_1 & 0        &    \cdots~\\
            ~0   & C_3 & C_3 & C_3-C_2 & C_3-2C_2 &   \cdots~\\
             ~\vdots & \vdots & \vdots & \vdots &\vdots & \ddots~
                   \end{array}\right]
\] and
\[
      D({\mathbb L}^{\rm S})^{-1}D=\left[\begin{array}{ccccccccc}
           ~ C_0  & 0      & 0      & 0        & 0         &     \cdots~\\
            ~2C_1 & C_0    & 0      & 0        & 0         &    \cdots~\\
            ~3C_2 & 3C_1   & C_1    & 0        & 0         &    \cdots~\\
            ~4C_3 & 5C_2   & 2C_2   &2C_2-3C_1 & 0         &     \cdots~\\
            ~5C_4 & 7C_3   & 3C_3   &3C_3-5C_2 &3C_3-7C_2  &    \cdots~\\
              ~\vdots & \vdots & \vdots &\vdots    &\vdots    &  \ddots~
                   \end{array}\right].
\]

\section{The Role of $D({\mathbb F}^{\rm S})^{-1}D$ and $D({\mathbb L}^{\rm S})^{-1}D$ in $P$-invariant sequences}
\par
\setcounter{num}{4}
\setcounter{equation}{0}
In this section, we focus on the role of $D({\mathbb F}^{\rm S})^{-1}D$ and $D({\mathbb L}^{\rm S})^{-1}D$
in transforming $P$-invariant (inverse $P$-invariant) sequences.
This allows us to examine combinatorial relationships between ${\mathbb F}^{\rm S}$ and $D({\mathbb F}^{\rm S})^{-1}D$.
We begin with a known result from \cite{Sun} needed in our subsequent discussion; it can be readily proven by taking advantage of the
Riordan matrix multiplication.
\begin{lem}
\label{Sun}
Let $P$ denote the Pascal matrix. Then
$g(x)$ is the generating function of a $P$-invariant or an inverse $P$-invariant sequence of the first kind if and only if
${1 \over {1-x}}g\left({-x \over {1-x}}\right)=\pm g(x).$
\end{lem}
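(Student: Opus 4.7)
The plan is to translate the eigenvector condition of Definition \ref{Rinvseq} into a generating-function identity via the Riordan multiplication rule (\ref{b1}). Since $P$ is a pseudo involution, $\mathbf{x}$ is $P$-invariant (resp.\ inverse $P$-invariant) of the first kind precisely when $PD\mathbf{x} = \mathbf{x}$ (resp.\ $PD\mathbf{x} = -\mathbf{x}$), so both cases can be handled uniformly as $PD\mathbf{x} = \pm\mathbf{x}$.

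First I would compute the generating function of $D\mathbf{x}$: since $D=(1,-x)$, applying the Riordan action to a column with generating function $g(x)$ yields simply $1\cdot g(-x)=g(-x)$. Next, applying $P = \left({1\over 1-x},\,{x\over 1-x}\right)$ via (\ref{b1}), the generating function of $PD\mathbf{x}$ becomes
\[
{1\over 1-x}\, g\!\left({-x\over 1-x}\right).
\]
Since the correspondence between column vectors in $\mathbb{R}^\infty$ and their generating functions is bijective, the eigenvalue equation $PD\mathbf{x}=\pm\mathbf{x}$ is equivalent to
\[
{1\over 1-x}\, g\!\left({-x\over 1-x}\right) = \pm g(x),
\]
which is the desired identity. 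Both directions of the equivalence follow at once, because the chain of computations above is reversible.

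There is no substantive obstacle: the lemma is a direct unwinding of Definition \ref{Rinvseq} together with the Riordan composition rule (\ref{b1}). The one point that requires care is recognizing that because $P$ is a \emph{pseudo} involution rather than an involution, the relevant operator in Definition \ref{Rinvseq}(a)--(b) is $PD$ and not $P$ itself; one must therefore compose the Riordan actions of $D$ and of $P$ in the correct order before reading off the functional equation.
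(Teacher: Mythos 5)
Your proposal is correct and matches the paper's argument: both unwind Definition \ref{Rinvseq} into the eigenvalue condition $PD\mathbf{x}=\pm\mathbf{x}$ and read off the functional equation from the Riordan action of $PD=\left({1\over 1-x},{-x\over 1-x}\right)$, the only cosmetic difference being that you apply $D$ and then $P$ while the paper first forms the product $PD$ via (\ref{b1}).
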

\begin{proof}
Let $g(x)=\sum_{n=0}^{\infty}a_nx^n$. Then $g(x)$ is the generating function of a $P$-invariant or an inverse $P$-invariant sequence of the first kind if and only if
$({1 \over {1-x}}, {-x \over {1-x}})[a_0, a_1, a_2, \ldots]^T=\pm[a_0, a_1, a_2, \ldots]^T$, which stands for ${1 \over {1-x}}g\left({-x \over {1-x}}\right)=\pm g(x)$
because $PD=({1 \over {1-x}}, {-x \over {1-x}})$ where $D=(1,-x)$.
\end{proof}
The following result follows from Lemmas \ref{columns} and \ref{Sun}.
\begin{cor}
\label{sol10}
Let $P$ denote the Pascal matrix. Then $g(x)$ is the generating function of a $P$-invariant sequence of the first kind if and only if ${{x} \over {2-x}}g(x)$ is the generating function of an inverse $P$-invariant sequence of the first kind.
  \end{cor}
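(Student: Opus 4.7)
The plan is to apply Lemma \ref{columns}(c)(d) to get explicit parametrizations of the two eigenspaces. Part (d) says $\mathbb{E}_1(PD)$ is spanned by the columns of the semi-Riordan matrix $\left(\frac{2-x}{1-x},\frac{x^2}{1-x}\right)$, so a column vector lies in $\mathbb{E}_1(PD)$ iff its generating function has the form $\frac{2-x}{1-x}\, h\!\left(\frac{x^2}{1-x}\right)$ for some formal power series $h(x)$. Part (c) similarly says $\mathbb{E}_{-1}(PD)$ is spanned by the columns of $\left(\frac{x}{1-x},\frac{x^2}{1-x}\right)$, so its members are exactly the generating functions of the form $\frac{x}{1-x}\, h\!\left(\frac{x^2}{1-x}\right)$. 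The two descriptions share the inner factor $h(x^2/(1-x))$ and differ only in the outer prefactor, whose ratio is $\frac{x/(1-x)}{(2-x)/(1-x)} = \frac{x}{2-x}$. This single observation does all the work.

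Granting this, the proof reduces to a line of algebra in each direction: if $g(x)=\frac{2-x}{1-x} h\!\left(\frac{x^2}{1-x}\right)$, then
\[
\frac{x}{2-x}\,g(x) \;=\; \frac{x}{1-x}\, h\!\left(\frac{x^2}{1-x}\right),
\]
which is the generic form of an inverse $P$-invariant generating function; conversely, multiplying any $\frac{x}{1-x} h(x^2/(1-x))$ by $\frac{2-x}{x}$ returns a $P$-invariant generating function. The one pedantic point to check is that both sides are genuine formal power series: $\frac{x}{2-x} = \frac{1}{2}x + \frac{1}{4}x^2 + \cdots$ is a power series with zero constant term, so its product with $g(x)$ is automatically a power series; conversely, any inverse $P$-invariant generating function produced via Lemma \ref{columns}(c) has zero constant term (it sits in the image of $\bigl(\tfrac{x}{1-x},\tfrac{x^2}{1-x}\bigr)$), so dividing by $x$ is legitimate and $\frac{2-x}{x}$ times it is a power series.

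I expect no real obstacle: the work has already been done in Lemma \ref{columns}, and the corollary is essentially the statement that the two parametrizing semi-Riordan matrices differ by left multiplication by $(\tfrac{x}{2-x},x)$ (or its inverse). If one preferred to avoid Lemma \ref{columns}, the same conclusion would follow from Lemma \ref{Sun} by direct substitution: the functional equation $\frac{1}{1-x}g(-x/(1-x))=g(x)$ for $g$ transforms, after multiplying by $\frac{x}{2-x}$ and noting that $x\mapsto -x/(1-x)$ sends $\frac{x}{2-x}$ to $\frac{-x/(1-x)}{2+x/(1-x)} = \frac{-x}{2-x}$, into the functional equation $\frac{1}{1-x}\tilde g(-x/(1-x)) = -\tilde g(x)$ for $\tilde g(x)=\frac{x}{2-x}g(x)$, with the sign flip precisely accounting for the change from invariant to inverse invariant.
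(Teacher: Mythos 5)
Your proof is correct, and it rests on the same foundation as the paper's proof (Lemma \ref{columns}(c),(d), i.e.\ the parametrizations of $\mathbb{E}_{\pm 1}(PD)$ by ${\mathbb L}^{\rm F}=\bigl(\tfrac{2-x}{1-x},\tfrac{x^2}{1-x}\bigr)$ and ${\mathbb F}^{\rm F}=\bigl(\tfrac{x}{1-x},\tfrac{x^2}{1-x}\bigr)$), but it is executed more directly. The paper first solves for a semi-Riordan transfer matrix $(l(x),h(x))$ with ${\mathbb F}^{\rm F}=(l(x),h(x)){\mathbb L}^{\rm F}$, which leads to the quadratic $(1-x)h(x)^2+x^2h(x)-x^2=0$ and hence to two branches, $\bigl(\tfrac{x}{2-x},x\bigr)$ and $\bigl(\tfrac{x}{(1-x)(2-x)},\tfrac{-x}{1-x}\bigr)$, and then invokes Lemma \ref{Sun} to show that the second branch also acts on generating functions as multiplication by $\tfrac{x}{2-x}$. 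You bypass the quadratic and Lemma \ref{Sun} entirely by observing that the two parametrizations share the inner substitution $h\!\bigl(\tfrac{x^2}{1-x}\bigr)$ and differ only by the prefactor ratio $\tfrac{x}{2-x}$ (equivalently, you take the $h(x)=x$ branch, ${\mathbb F}^{\rm F}=\bigl(\tfrac{x}{2-x},x\bigr){\mathbb L}^{\rm F}$, directly), and you correctly handle the only delicate point in the converse, namely cancelling the factor $x$ before multiplying by $2-x$. Your sketched alternative via Lemma \ref{Sun} alone — showing the substitution $x\mapsto -x/(1-x)$ carries $\tfrac{x}{2-x}$ to $\tfrac{-x}{2-x}$, so the functional equation picks up exactly the sign flip — is a genuinely different and self-contained route that avoids Lemma \ref{columns} altogether; it is arguably the cleanest argument, while the paper's version has the side benefit of exhibiting both explicit transfer matrices between ${\mathbb L}^{\rm F}$ and ${\mathbb F}^{\rm F}$.
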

\begin{proof}
Let $(l(x), h(x))$ denote a semi-Riordan matrix such that ${\mathbb F}^{\rm F}=(l(x), h(x)){\mathbb L}^{\rm F}$. Then by Lemma \ref{columns} (c) and (d),
$h(x)$ satisfies $(1-x)h(x)^2+x^2h(x)-x^2=0$. So $h(x)={{-x^2\pm \sqrt{x^4+4(1-x)x^2}}\over 2(1-x)}$, which implies that $h(x)={-x \over {1-x}}$ or $x$.
Thus we have $(l(x), h(x))=({{x} \over {(1-x)(2-x)}}, {-x \over {1-x}})$ or $({{x} \over {2-x}}, x)$. Let ${\mathbf w}$ be a $P$-invariant sequence of the first kind with its generating function $g(x)$. Then ${\mathbf w}={\mathbb L}^{\rm F}{\mathbf u}$ for some ${\mathbf u}  \in  \mathbb{R}^\infty$ and we have
$(l(x),h(x)){\mathbf w}={\mathbb F}^{\rm F}{\mathbf u}$.
So by Lemma \ref{columns} (c), $(l(x),h(x)){\mathbf w}$ is an inverse $P$-invariant sequence of the first kind with its generating function ${{x} \over {2-x}}g(x)$
because ${{x} \over {(1-x)(2-x)}}g({{-x} \over {1-x}})={{x} \over {2-x}}g(x)$ by Lemma \ref{Sun}. The other direction can be proven similarly.
\end{proof}
Next we examine the role of $D({\mathbb F}^{\rm S})^{-1}D$ and $D({\mathbb L}^{\rm S})^{-1}D$ with the Catalan numbers
for converting a $P$-invariant to an inverse $P$-invariant sequence of the first kind, and vice versa.
\begin{thm}
\label{sol11}
For ${\mathbf v}\in  \mathbb{R}^\infty$, let ${\mathbf w}={\mathbb F}^{\rm F}{\mathbf v}$ and ${\mathbf x}={\mathbb L}^{\rm F}{\mathbf v}$. Then the following hold.
\begin{itemize}
  \item [(a)] $\left({{2-xC(x)} \over {xC(x)}}, xC(x)\right){\mathbf w}=D({\mathbb F}^{\rm S})^{-1}D{\mathbf x}$,
  \item [(b)] $\left({{2-{xC(x)}} \over {{xC(x)}-2{x^2C(x)^2}}}, {xC(x)}\right){\mathbf w}=D({\mathbb L}^{\rm S})^{-1}D{\mathbf x}$,
  \item [(c)] $\left({{xC(x)} \over {2-{xC(x)}}},{xC(x)}\right){\mathbf x}=D({\mathbb F}^{\rm S})^{-1}D{\mathbf w}$,
  \item [(d)] $\left({{xC(x)} \over {(1-2{xC(x)})(2-{xC(x)})}}, {xC(x)}\right){\mathbf x}=D({\mathbb L}^{\rm S})^{-1}D{\mathbf w}$ where $C(x)={{1-\sqrt{1-4x}}\over 2x}$.
\end{itemize}
\end{thm}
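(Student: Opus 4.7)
The plan is to reduce each of (a)--(d) to a single scalar generating-function identity. The key observation is that $\mathbb{F}^{\rm F}=\left(\frac{x}{1-x},\frac{x^2}{1-x}\right)$ and $\mathbb{L}^{\rm F}=\left(\frac{2-x}{1-x},\frac{x^2}{1-x}\right)$ share the same second component. Hence, if $V$, $W$, $X$ denote the generating functions of $\mathbf{v}$, $\mathbf{w}$, $\mathbf{x}$ respectively, then
\[
W(x)=\frac{x}{1-x}V\!\left(\frac{x^2}{1-x}\right)\quad\text{and}\quad X(x)=\frac{2-x}{1-x}V\!\left(\frac{x^2}{1-x}\right),
\]
from which I read off the single relation $W(x)=\frac{x}{2-x}X(x)$. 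This is precisely the transformation supplied by Corollary \ref{sol10}, consistent with the fact that $\mathbf{w}$ is an inverse $P$-invariant and $\mathbf{x}$ is a $P$-invariant sequence of the first kind by Lemma \ref{columns}(c)--(d).

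Next I invoke Lemma \ref{inverse}(a)--(b) to rewrite the right-hand sides of (a)--(d) using the semi-Riordan forms $D(\mathbb{F}^{\rm S})^{-1}D=(1,xC(x))$ and $D(\mathbb{L}^{\rm S})^{-1}D=\left(\frac{1}{1-2xC(x)},xC(x)\right)$. Applying the rule that $(g,f)\mathbf{u}$ has generating function $g(x)u(f(x))$, each of the four claims becomes an equality between a prescribed multiple of $W(xC(x))$ and a prescribed multiple of $X(xC(x))$. Substituting $W(xC(x))=\frac{xC(x)}{2-xC(x)}X(xC(x))$ then collapses each identity to a triviality. For instance, in (a) the left side reads $\frac{2-xC(x)}{xC(x)}\cdot\frac{xC(x)}{2-xC(x)}X(xC(x))=X(xC(x))$, matching the right side; parts (b)--(d) are analogous one-line cancellations using the factors $1-2xC(x)$ and $2-xC(x)$.

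The only technical obstacle is that the scalars $\frac{2-xC(x)}{xC(x)}$ in (a) and $\frac{2-xC(x)}{xC(x)-2x^2C(x)^2}$ in (b) have a simple pole at $x=0$ and so fail the $g_0\neq 0$ hypothesis of Definition \ref{semi-Riordan}. However, $\mathbf{w}\in\mathbb{E}_{-1}(PD)$ by Lemma \ref{columns}(c), and indeed the formula for $W$ above shows $W(s)$ carries $s$ as a factor, so $W(xC(x))$ contributes a compensating factor of $xC(x)$ and the left-hand products in (a) and (b) are genuine formal power series. The scalars in (c) and (d) present no such difficulty, since $2-xC(x)$ and $1-2xC(x)$ have nonzero constant term. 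Apart from this bookkeeping, the whole proof is driven by the single relation $W(x)=\frac{x}{2-x}X(x)$ together with the semi-Riordan multiplication rule (\ref{b1}).
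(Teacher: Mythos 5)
Your proof is correct and follows essentially the same route as the paper: both arguments rest on the single relation $\mathbf{w}=\left(\frac{x}{2-x},\,x\right)\mathbf{x}$ (equivalently $W(x)=\frac{x}{2-x}X(x)$, which you obtain more directly from the shared second component of ${\mathbb F}^{\rm F}=\left(\frac{x}{1-x},\frac{x^2}{1-x}\right)$ and ${\mathbb L}^{\rm F}=\left(\frac{2-x}{1-x},\frac{x^2}{1-x}\right)$, while the paper cites the proof of Corollary \ref{sol10}) combined with Lemma \ref{inverse}'s identifications $D({\mathbb F}^{\rm S})^{-1}D=(1,xC(x))$ and $D({\mathbb L}^{\rm S})^{-1}D=\left(\frac{1}{1-2xC(x)},xC(x)\right)$, your generating-function cancellations being exactly the paper's semi-Riordan compositions written out scalar-wise. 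Your remark that the apparent pole of $\frac{2-xC(x)}{xC(x)}$ is harmless because $W$ is divisible by its argument is a sound piece of bookkeeping that the paper leaves implicit.
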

\begin{proof}
(a) We already know that ${\mathbb F}^{\rm F}=\left({x \over {2-x}}, x \right){\mathbb L}^{\rm F}$ or ${\mathbb F}^{\rm F}=\left({{x} \over {(1-x)(2-x)}}, {{-x} \over {1-x}}\right){\mathbb L}^{\rm F}$ as in the proof of Corollary \ref{sol10}. Since
\begin{equation*}
\begin{split}
\left({{-1+{2 \over {xC(x)}}} \over {xC(x)-1}}, {{xC(x)} \over {xC(x)-1}}\right)\left({{x} \over {(1-x)(2-x)}}, {{-x} \over {1-x}}\right)&=\left({{2} \over {xC(x)}}-1, xC(x)\right)\left({x \over {2-x}}, x \right)\\
&=(1, xC(x)),
\end{split}
\end{equation*}
Clauses (a) and (b) directly follow by Lemmas \ref{inverse}. Clauses (c) and (d) can be proven similarly.
\end{proof}
For ${\mathbf v} \in \mathbb{R}^\infty$, let $GF({\mathbf v})$ denote the generating function of ${\mathbf v}$. In the following corollary, the Catalan and Motzkin numbers play a critical role in transforming the generating functions of $P$-invariant and inverse $P$-invariant sequences of the first kind.
\begin{cor}
\label{sol12}
For ${\mathbf v}  \in  \mathbb{R}^\infty$, let $g_{-{\mathbf v}}(x)$ resp. $g_{\mathbf v}(x)$ denote $GF({\mathbb F}^{\rm F}{\mathbf v})$ resp. $GF({\mathbb L}^{\rm F}{\mathbf v})$. Then we have the following:
\begin{itemize}
  \item [(a)] $g_{\mathbf v}(xC(x))=\left(C(x)- {2 \over x}\right)g_{-{\mathbf v}}(x-C(x))$,
  \item [(b)] $g_{\mathbf v}(x-C(x))=\left(2xC(x)^2+xC(x)-4C(x)+ {{2-x} \over x}\right)g_{-{\mathbf v}}(xC(x))$,
  \item [(c)] $g_{\mathbf v}(xC(x))=\left({x-2 \over x}+{x \over {1-x}}M({x \over {1-x}})\right)g_{-{\mathbf v}}(x-1-{x \over {1-x}}M({x \over {1-x}}))$,
  \item [(d)] $g_{\mathbf v}(x-1-{x \over {1-x}}M({x \over {1-x}}))=\left((3x-4)C(x)+{{2x^2C(x)} \over {1-x}}M({x \over {1-x}})+ {{2-x} \over x}\right)g_{-{\mathbf v}}(xC(x))$, where $C(x)={{1-\sqrt{1-4x}}\over 2x}$ and $M(x)={{1-x- \sqrt{(1-x)^2-4x^2}}\over 2x^2}$.
\end{itemize}
\end{cor}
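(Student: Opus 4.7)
The plan is to interpret each part of Theorem \ref{sol11} as a generating-function identity, using the semi-Riordan action rule: for a semi-Riordan matrix $(a(x),b(x))$ and a column vector $\mathbf{u}\in \mathbb{R}^\infty$ with generating function $u(x)$, the vector $(a(x),b(x))\mathbf{u}$ has generating function $a(x)u(b(x))$, as recorded in (\ref{b1}) and (\ref{b2}). Since $g_{-\mathbf v}$ is the generating function of $\mathbf{w}=\mathbb{F}^{\rm F}\mathbf{v}$ and $g_{\mathbf v}$ that of $\mathbf{x}=\mathbb{L}^{\rm F}\mathbf{v}$, this rule converts the four matrix identities of Theorem \ref{sol11} into four identities between $g_{\mathbf v}$ and $g_{-\mathbf v}$ composed with appropriate series.

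For parts (a) and (b), I would apply the translation using the \emph{first} expressions in Lemma \ref{inverse}, namely $D({\mathbb F}^{\rm S})^{-1}D=(1,xC(x))$ and $D({\mathbb L}^{\rm S})^{-1}D=\left(\frac{1}{1-2xC(x)},xC(x)\right)$. Reading off the generating functions of the two sides of Theorem \ref{sol11}(a) and (b), and simplifying the resulting coefficient in front of $g_{-\mathbf v}$ via the defining identity $xC(x)^2=C(x)-1$, produces the form stated in (a) and (b).

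For parts (c) and (d), I would use instead the \emph{second} expressions in Lemma \ref{inverse}, factoring $D({\mathbb F}^{\rm S})^{-1}D$ and $D({\mathbb L}^{\rm S})^{-1}D$ through the Pascal matrix as $P\left(\frac{M(x)-xM(x)-x^2M(x)^2}{1+x},\frac{x+x^2M(x)}{1+x}\right)$ and its $W(x)$-scaled analogue. Applying the semi-Riordan action twice---first to the inner matrix and then to $P=\left(\frac{1}{1-x},\frac{x}{1-x}\right)$---produces a generating function for $D({\mathbb F}^{\rm S})^{-1}D\mathbf{x}$ in which $g_{\mathbf v}$ is evaluated at an $M$-laden argument obtained by substituting $\frac{x}{1-x}$ into the inner column function. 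Equating this with the generating function of the LHS of Theorem \ref{sol11}(a), and the analogous equation for (b), delivers (c) and (d).

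The main obstacle is the bookkeeping in the second half: after composing with $P$, one must reorganize rational functions in $M\!\left(\frac{x}{1-x}\right)$ and $C(x)$ so that both the outer coefficient and the inner argument collapse to the forms stated. The workhorse identities are $xC(x)^2=C(x)-1$ and the Euler-transform relation $C\!\left(\frac{x}{1+x}\right)=1+xM(x)$ implicit in (\ref{CMotzkin}) during the proof of Lemma \ref{inverse}; together they should telescope the intermediate expressions to what appears in (c) and (d).
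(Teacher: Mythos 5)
Your translation step is fine as far as it goes, but it cannot yield the statement. Reading off generating functions from Theorem \ref{sol11}(a) via the action rule gives
\begin{equation*}
\frac{2-xC(x)}{xC(x)}\,g_{-\mathbf v}\bigl(xC(x)\bigr)\;=\;g_{\mathbf v}\bigl(xC(x)\bigr),
\end{equation*}
an identity in which \emph{both} $g_{\mathbf v}$ and $g_{-\mathbf v}$ are evaluated at the same series $xC(x)$; the same happens for the other parts of Theorem \ref{sol11}, and also if you route the computation through the $P$-factored expressions of Lemma \ref{inverse}, since substituting $x/(1-x)$ into the inner column function just reconstitutes $xC(x)$ (this is exactly the content of (\ref{Euler})). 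No simplification of the outer coefficient by $xC(x)^2=C(x)-1$ can change the argument of $g_{-\mathbf v}$ from $xC(x)$ to the different series $x-C(x)$ appearing in (a) and (b), so your plan as written cannot reach the stated identities, nor their Motzkin rewrites (c) and (d). Note also that the coefficient you would obtain, $\frac{2-xC(x)}{xC(x)}$, is not the stated $C(x)-\frac{2}{x}$; they differ by the factor $-C(x)$, which is precisely the trace of the step you are missing.

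The missing ingredient is Lemma \ref{Sun}: since ${\mathbb F}^{\rm F}{\mathbf v}$ is an inverse $P$-invariant sequence of the first kind, its generating function satisfies $\frac{1}{1-t}\,g_{-\mathbf v}\!\left(\frac{-t}{1-t}\right)=-g_{-\mathbf v}(t)$. This is how the paper proceeds: it combines the scalar relation $g_{\mathbf v}(t)=\frac{2-t}{t}\,g_{-\mathbf v}(t)$ (Corollary \ref{sol10}, equivalently your translated Theorem \ref{sol11}(a)/(c)) with this invariance, so that $g_{\mathbf v}(t)=-\frac{2-t}{t(1-t)}\,g_{-\mathbf v}\!\left(\frac{-t}{1-t}\right)$, and then sets $t=xC(x)$; using $1-xC(x)=1/C(x)$ the argument becomes $\frac{-xC(x)}{1-xC(x)}$ and the coefficient becomes $C(x)-\frac{2}{x}$, which is (a), with (b) obtained from the reciprocal relation $g_{-\mathbf v}(t)=\frac{t}{(2-t)(1-t)}\,g_{\mathbf v}\!\left(\frac{-t}{1-t}\right)$. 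Parts (c) and (d) then follow simply by substituting the Euler identity (\ref{Euler}), $C(x)=1+\frac{x}{1-x}M\!\left(\frac{x}{1-x}\right)$, into (a) and (b); no second pass through the factored matrices of Lemma \ref{inverse} is needed or helpful. Without the invariance step from Lemma \ref{Sun} your argument has a genuine gap, since nothing in Theorem \ref{sol11} or Lemma \ref{inverse} alone changes the point of evaluation of $g_{-\mathbf v}$.
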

\begin{proof}
By Lemma \ref{Sun} and Theorem \ref{sol11} (a) and (c), we have
$g_{\mathbf v}(t)={{2-t} \over t}g_{-\mathbf v}(t)=-{{2-t} \over {t(1-t)}}g_{-\mathbf v}({{-t} \over {1-t}})$
and $g_{-\mathbf v}(t)={{t} \over {2-t}}g_{\mathbf v}(t)={{t} \over {(2-t)(1-t)}}g_{\mathbf v}({{-t} \over {1-t}})$ where $t=xC(x)$.
Thus (a) and (b) follow by the direct calculations.
By using the Euler transformation in the proof of Lemma \ref{inverse}, we obtain
\begin{equation}
\label{Euler}
C(x)=1+{x \over {1-x}}M({x \over {1-x}}).
\end{equation}
Applying (\ref{Euler}) to (a) and (b) yields (c) and (d).
\end{proof}

In the next theorem, we present one of our main results, namely an answer to {\bf Q2}, featuring a combinatorial relationship between ${\mathbb F}^{\rm S}$ and $D({\mathbb F}^{\rm S})^{-1}D$ and furthermore, between ${\mathbb L}^{\rm S}$ and $D({\mathbb L}^{\rm S})^{-1}D$. The result reveals
that $D({\mathbb F}^{\rm S})^{-1}D$ and $D({\mathbb F}^{\rm S})^{-1}D$ provide a mechanism (that features the Catalan numbers) for transforming
a $P$-invariant into an inverse $P$-invariant sequence  of the second kind, and vice versa.
\begin{thm}
\label{sol13}
For ${\mathbf v}  \in  \mathbb{R}^\infty$, let ${\mathbf y}={\mathbb F}^{\rm S}{\mathbf v}$, ${\mathbf z}={\mathbb L}^{\rm S}{\mathbf v}$. Then the following hold:
\begin{itemize}
  \item [(a)] $\left({{1} \over {1+2xC(x)}}, xC(x)\right){\mathbf z}=D({\mathbb F}^{\rm S})^{-1}D{\mathbf y}$,
  \item [(b)] $\left({{1} \over {1-4x^2C(x)^2}}, xC(x)\right){\mathbf z}=D({\mathbb L}^{\rm S})^{-1}D{\mathbf y}$,
  \item [(c)] $\left(1+2xC(x), xC(x)\right){\mathbf y}=D({\mathbb F}^{\rm S})^{-1}D{\mathbf z}$,
  \item [(d)] $\left({{1+2xC(x)} \over {1-2xC(x)}}, xC(x)\right){\mathbf y}=D({\mathbb L}^{\rm S})^{-1}D{\mathbf z}$ where $C(x)={{1-\sqrt{1-4x}}\over 2x}$.
\end{itemize}
\end{thm}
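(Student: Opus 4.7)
The plan is to reduce all four identities to one compact semi-Riordan calculation, namely the explicit shape of the change-of-basis matrix $\mathbb{L}^{\rm S}(\mathbb{F}^{\rm S})^{-1}$. The overall strategy directly mirrors the proof of Theorem \ref{sol11}, but for the second-kind matrices, so no new conceptual ingredient is needed.

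First, using (\ref{b3}) I would compute $(\mathbb{F}^{\rm S})^{-1} = (1, (-1+\sqrt{1+4x})/2)$ and then invoke (\ref{b1}) together with the key simplification $\sqrt{1+4x(1+x)} = \sqrt{(1+2x)^2} = 1+2x$ to obtain
$$\mathbb{L}^{\rm S}(\mathbb{F}^{\rm S})^{-1} = (1+2x, x(1+x))\,(1, (-1+\sqrt{1+4x})/2) = (1+2x, x).$$
Dually, $\mathbb{F}^{\rm S}(\mathbb{L}^{\rm S})^{-1} = (1/(1+2x), x)$. Since ${\mathbf y}=\mathbb{F}^{\rm S}{\mathbf v}$ and ${\mathbf z}=\mathbb{L}^{\rm S}{\mathbf v}$ for the same $\mathbf v$, these yield ${\mathbf z} = (1+2x, x){\mathbf y}$ and ${\mathbf y} = (1/(1+2x), x){\mathbf z}$.

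Second, I would invoke Lemma \ref{inverse} to substitute the closed forms $D(\mathbb{F}^{\rm S})^{-1}D = (1, xC(x))$ and $D(\mathbb{L}^{\rm S})^{-1}D = (1/(1-2xC(x)), xC(x))$. Plugging these together with the change-of-basis formulas into each clause, every identity collapses to a one-line verification via (\ref{b1}). For clause (a):
$$\left(\frac{1}{1+2xC(x)}, xC(x)\right)(1+2x, x) = \left(\frac{1+2xC(x)}{1+2xC(x)}, xC(x)\right) = (1, xC(x)).$$
Clause (b) uses in addition the factorization $1-4x^2C(x)^2 = (1-2xC(x))(1+2xC(x))$; clauses (c) and (d) are the symmetric calculations based on $\mathbb{F}^{\rm S}(\mathbb{L}^{\rm S})^{-1} = (1/(1+2x), x)$ and the same two simplifications.

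The main obstacle is purely symbolic bookkeeping rather than anything conceptual: correctly performing the square-root simplification $\sqrt{1+4x(1+x)} = 1+2x$ in the formal power-series sense, and keeping the composition order straight in (\ref{b1}) when the inner function is $xC(x)$. Once the change-of-basis shape $\mathbb{L}^{\rm S}(\mathbb{F}^{\rm S})^{-1} = (1+2x, x)$ is pinned down, each of the four clauses reduces to a one-line multiplication in complete parallel with Theorem \ref{sol11}.
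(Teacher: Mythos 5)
Your proposal is correct and takes essentially the same route as the paper: the paper likewise reduces all four clauses to the single relation ${\mathbb L}^{\rm S}=\left(1+2x,x\right){\mathbb F}^{\rm S}$ (which it reads off from Lemma \ref{columns}(a),(b) rather than via your explicit computation of $({\mathbb F}^{\rm S})^{-1}$), and then verifies each identity by a one-line Riordan multiplication against the closed forms in Lemma \ref{inverse}, exactly as you do.
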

\begin{proof}
(a) From Lemma \ref{columns} (a) and (b), it can be derived easily that ${\mathbb L}^{\rm S}=\left(1+2x, x \right){\mathbb F}^{\rm S}$. Since $\left({{1} \over {1+2xC(x)}}, xC(x)\right)\left(1+2x, x \right)=(1, xC(x)),$ (a) follows directly from Lemma \ref{inverse} (a). Clauses (b), (c), and (d) can be proven similarly.
\end{proof}
We conclude this section with the relationships (involving the Catalan and Motzkin numbers) between the generating functions
of $P$-invariant and inverse $P$-invariant sequences of the second kind.
\begin{cor}
\label{sol14}
For ${\mathbf v}  \in  \mathbb{R}^\infty$, let $h_{\mathbf v}(x)$ resp. $h_{-\mathbf v}(x)$ denote $GF({\mathbb F}^{\rm S}{\mathbf v})$ resp. $GF({\mathbb L}^{\rm S}{\mathbf v})$. Then we have the following:
\begin{itemize}
  \item [(a)] $h_{\mathbf v}(xC(x))={{1} \over {1+2xC(x)}}h_{-{\mathbf v}}(xC(x))$,
  \item [(b)] $h_{-{\mathbf v}}(x+{x^2 \over {1-x}}M({x \over {1-x}}))=\left(1+2x+{{2x^2} \over {1-x}}M({x \over{1-x}})\right) h_{\mathbf v}(x+{x^2 \over {1-x}}M({x \over {1-x}}))$
  where
  $C(x)={{1-\sqrt{1-4x}}\over 2x}$ and $M(x)={{1-x- \sqrt{(1-x)^2-4x^2}}\over 2x^2}$.
 \end{itemize}
\end{cor}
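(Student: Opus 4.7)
The plan is to read off the corollary as the generating-function translation of Theorem \ref{sol13}, together with the Catalan/Motzkin identity (\ref{Euler}) established inside the proof of Corollary \ref{sol12}. Because $\mathbf{y}=\mathbb{F}^{\rm S}\mathbf{v}$ and $\mathbf{z}=\mathbb{L}^{\rm S}\mathbf{v}$, we have $GF(\mathbf{y})=h_{\mathbf{v}}(x)$ and $GF(\mathbf{z})=h_{-\mathbf{v}}(x)$, so each clause in Theorem \ref{sol13} applied to the appropriate vector yields an identity between power series.

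For clause (a), I would invoke Theorem \ref{sol13}(a): applying the semi-Riordan matrix $\left(\frac{1}{1+2xC(x)},xC(x)\right)$ to $\mathbf{z}$ produces the generating function $\frac{1}{1+2xC(x)}h_{-\mathbf{v}}(xC(x))$ by the column rule in Definition \ref{semi-Riordan}, while applying $D(\mathbb{F}^{\rm S})^{-1}D=(1,xC(x))$ (from Lemma \ref{inverse}(a)) to $\mathbf{y}$ produces $h_{\mathbf{v}}(xC(x))$. Equating the two gives (a) immediately.

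For clause (b), the strategy is to rewrite (a) as $h_{-\mathbf{v}}(xC(x))=(1+2xC(x))\,h_{\mathbf{v}}(xC(x))$ and then substitute the Catalan--Motzkin identity $C(x)=1+\frac{x}{1-x}M\!\left(\frac{x}{1-x}\right)$ recorded as (\ref{Euler}). That substitution converts the inner argument $xC(x)$ into $x+\frac{x^2}{1-x}M\!\left(\frac{x}{1-x}\right)$ and converts the prefactor $1+2xC(x)$ into $1+2x+\frac{2x^2}{1-x}M\!\left(\frac{x}{1-x}\right)$, giving exactly (b). Alternatively, one can derive (b) directly from Theorem \ref{sol13}(c), which gives $(1+2xC(x))\,h_{\mathbf{v}}(xC(x))=h_{-\mathbf{v}}(xC(x))$ in one step, and then apply the Motzkin rewrite.

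There is essentially no obstacle: the work is symbolic substitution combined with the definition of how a semi-Riordan matrix acts on generating functions. The only care needed is to keep the composition direction straight --- in $(g(x),f(x))\,u(x)=g(x)\,u(f(x))$ the series $u$ is evaluated at $f(x)$, which is why $h_{\mathbf{v}}$ and $h_{-\mathbf{v}}$ acquire the argument $xC(x)$ on both sides --- and to trust (\ref{Euler}) so that the Motzkin form of the arguments in (b) coincides with $xC(x)$.
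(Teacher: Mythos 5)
Your proposal is correct and follows essentially the same route as the paper: clause (a) is read off from Theorem \ref{sol13}(a) together with Lemma \ref{inverse}(a) via the action $(g(x),f(x))u(x)=g(x)u(f(x))$, and clause (b) is the substitution of the identity (\ref{Euler}), $C(x)=1+\frac{x}{1-x}M\!\left(\frac{x}{1-x}\right)$, into the rearranged form of (a). You have merely spelled out the symbolic steps the paper leaves implicit, so no further changes are needed.
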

\begin{proof}
Clause (a) follows from Lemma \ref{inverse} and Theorem \ref{sol13} and (b) is due to (\ref{Euler}).
\end{proof}

\section{A connection between Riordan Involutions  and Invariant Sequences}
\par
\setcounter{num}{5}
\setcounter{equation}{0}
In this section, the Riordan (pseudo) involution $R$ is applied to construct $R$-invariant sequences of the first or second kind, as in Definition \ref{Rinvseq}.
From this, we show that for every Riordan involution $R$ in the $(-1)$-Appell subgroup, there exists $B^n(n=1, 2, \ldots)$ in the Appel subgroup such that $R=B^nDB^{-n}$.
Interestingly, for each $n=1, 2, \ldots$ such a $B^n$ can be directly constructed from $R$ itself and has a related combinatorial significance for
$R$-invariant or inverse $R$-invariant sequences of the first kind.

The following lemma provides a method for constructing $R$-invariant sequences of the first or second kind by means of the Riordan (pseudo) involution $R$ itself.
\begin{lem}
\label{Rinv}
Let $U=RD$ and $V=DR$, where $D=(1, -x)$.
For each positive integer $n$, let $R$ be a Riordan (pseudo) involution. Then the following hold:
\begin{itemize}
  \item [(a)] the columns of $(U+D)^n~((R+D)^n)$ are $R$-invariant sequences of the first kind.
  \item [(b)] the columns of $(U-D)^n ~ ((R-D)^n)$ are inverse $R$-invariant sequences of the first kind.
  \item [(c)] the columns of $(V^T+D)^n~((R^T+D)^n)$ are $R$-invariant sequences of the second kind.
  \item [(d)] the columns of $(V^T-D)^n~((R^T-D)^n)$ are inverse $R$-invariant sequences of the second kind.
\end{itemize}
\end{lem}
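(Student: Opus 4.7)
The plan is to reduce each of the eight sub-claims to a single base identity of the form $TX=\varepsilon X$, with $\varepsilon\in\{+1,-1\}$, where $T$ is the operator whose $\varepsilon$-eigenspace characterizes the sequence class in Definition \ref{Rinvseq} (so $T\in\{R,U,R^T,V^T\}$) and $X$ is the matrix whose powers appear in the claim. Once the base identity $TX=\varepsilon X$ is in hand, induction on $n$ is immediate: $TX^n=(TX)X^{n-1}=\varepsilon X^n$, so every column of $X^n$ lies in $\mathbb{E}_{\varepsilon}(T)$, which is exactly what Definition \ref{Rinvseq} requires for the corresponding type of (inverse) $R$-invariant sequence.

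For a Riordan involution $R$ (so $R^2=I$, corresponding to the non-parenthetical formulas), the base identities are one-line rewrites using $U=RD$:
\[
R(U\pm D)=R\cdot RD\pm RD=R^2D\pm U=D\pm U=\pm(U\pm D),
\]
which proves (a) and (b). Taking transposes of $R^2=I$ gives $(R^T)^2=I$, and since $V^T=R^TD$ the same manipulation yields
\[
R^T(V^T\pm D)=(R^T)^2D\pm R^TD=D\pm V^T=\pm(V^T\pm D),
\]
proving (c) and (d).

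For a Riordan pseudo-involution $R$ (so $U^2=I$, corresponding to the parenthetical formulas), I would first record the equivalent form $RDR=D$ of the relation $RDRD=I$, obtained by multiplying on the right by $D$ and using $D^2=I$. From $RDR=D$ come $UR=D$ and $UD=R$, so
\[
U(R\pm D)=UR\pm UD=D\pm R=\pm(R\pm D),
\]
which gives (a) and (b). Transposing $RDR=D$ gives $R^TDR^T=D$, i.e.\ $V^TR^T=D$; together with $V^TD=R^T$ this produces $V^T(R^T\pm D)=\pm(R^T\pm D)$, proving (c) and (d).

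The only step that is not a purely formal substitution is the equivalence $(RD)^2=I\iff RDR=D$ (and its transpose) in the pseudo-involution case. I would record it once as a preliminary observation, after which all eight sub-claims reduce to the same two-line template: verify the base identity $TX=\varepsilon X$ and invoke the one-step induction $TX^n=\varepsilon X^n$. I expect no further obstacle beyond this one rewrite.
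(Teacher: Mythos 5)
Your proposal is correct and takes essentially the same approach as the paper: establish the one-step identities $R(U\pm D)=\pm(U\pm D)$ and $R^{T}(V^{T}\pm D)=\pm(V^{T}\pm D)$ (using $R^2=I$, resp.\ $RDR=D$ in the pseudo case), and then note that multiplying by $(X)^{n-1}$ transfers the eigenvalue relation to every column of the $n$th power. If anything, you are slightly more thorough than the paper, which writes out only the involution cases (a) and (d) and leaves the remaining cases, including the parenthetical pseudo-involution claims, as ``similar.''
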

\begin{proof}
Let $U=RD$ and $V=DR$ where $R$ is a Riordan involution and $D=(1, -x)$.
Then for each positive integer $n$, since $R$ is a Riordan involution, we have
$R(U+D)^n=(RRD+U)(U+D)^{n-1}=(U+D)^n$
resp. $R^T(V^T-D)^n=(R^TR^TD-V^T)(V^T-D)^{n-1}=-(V^T-D)^n$, which imply that each column of $(U+D)^n$ is a $R$-invariant sequence of the first kind resp. each column of $(V^T-D)^n$ is an inverse $R$-invariant sequence of the second kind. The two clauses (b) and (c) are also proven similarly.
\end{proof}
For the $(-1)$-Appell subgroup, Lemma \ref{Rinv} allows us to answer {\bf Q8}. First, we need a simple lemma, in which $I$
denotes as usual the infinite identity matrix.
\begin{lem}
\label{Radd}
Let $R=(g(x), f(x))$ be a Riordan matrix with positive main diagonal entries. Then $R+I$ is a Riordan matrix if and only if $f(x)=x$.
\end{lem}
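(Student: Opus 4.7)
The plan is to exploit the fact that the $i$-th column of $R+I$ has generating function $g(x)f(x)^i+x^i$, and to force the Riordan structure $\tilde g(x)\tilde f(x)^i$ column by column. The first two columns determine $\tilde g$ and $\tilde f$ uniquely, and then the third column yields an algebraic relation that collapses to $f(x)=x$.

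More concretely, suppose $R+I=(\tilde g(x),\tilde f(x))$. Looking at column $0$ gives $\tilde g(x)=g(x)+1$; note that the positivity assumption $g_0>0$ (which follows from the main diagonal entries $g_0 f_1^i$ being positive) ensures $\tilde g(0)=g_0+1\neq 0$, so this is consistent with the Riordan requirement. Column $1$ then forces
\[
(g(x)+1)\tilde f(x)=g(x)f(x)+x,\qquad\text{i.e.,}\qquad \tilde f(x)=\frac{g(x)f(x)+x}{g(x)+1}.
\]
Column $2$ demands $(g(x)+1)\tilde f(x)^2=g(x)f(x)^2+x^2$; substituting the formula for $\tilde f$ and clearing denominators yields
\[
(g(x)f(x)+x)^2=(g(x)+1)(g(x)f(x)^2+x^2),
\]
which after expansion and cancellation simplifies to $g(x)\bigl(f(x)-x\bigr)^2=0$. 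Since $g(x)$ is a unit in the ring of formal power series (because $g_0\neq 0$), we conclude $f(x)=x$.

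For the converse, if $f(x)=x$ then $R=(g(x),x)$ lies in the Appell subgroup, and column $i$ of $R+I$ has generating function $g(x)x^i+x^i=(g(x)+1)x^i$, so $R+I=(g(x)+1,x)$. This is a Riordan matrix because $g_0+1>0\neq 0$ and the marker $x$ has leading coefficient $1$. The only real ``step'' is the algebraic simplification in the forward direction, and the mild issue worth checking is that the Riordan admissibility conditions on $\tilde g$ and $\tilde f$ do not fail; both are guaranteed by the positivity of the diagonal of $R$, so I do not expect any genuine obstacle.
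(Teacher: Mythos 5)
Your proposal is correct and follows essentially the same route as the paper: match columns $0$, $1$, $2$ of $R+I$ against the Riordan form, solve for $\tilde g=g+1$ and $\tilde f=(gf+x)/(g+1)$, and reduce the column-$2$ identity $(gf+x)^2=(g+1)(gf^2+x^2)$ to $(f(x)-x)^2=0$ (the paper writes this as $2xf=f^2+x^2$), with the same easy converse when $f(x)=x$. Your explicit remark that $g_0>0$ keeps $\tilde g(0)\neq 0$ is a small welcome addition left implicit in the paper.
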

\begin{proof}
Let $R=(g(x), f(x))$ be a Riordan matrix with positive main diagonal entries. If $f(x)=x$, then for $i=0, 1, \ldots$, the generating function of the $i$th column of $R+I$ is $g(x)x^i+x^i=(g(x)+1)x^i$. So $R+I$ is a Riordan matrix with $R+I=(g(x)+1, x)$. Conversely, assume that $R+I=(h(x), l(x))$ is a Riordan matrix. Then clearly, we have $h(x)=g(x)+1$ and for $i=0, 1, \ldots$,
the $i$th column of $R+I$ is $(g(x)+1)l(x)^i=g(x)f(x)^i+x^i$, from which we get ${{(g(x)f(x)+x)^2} \over {(g(x)+1)^2}}={{g(x)f(x)^2+x^2} \over {g(x)+1}}$, and $2xf(x)=f(x)^2+x^2$. Thus $f(x)=x$.
\end{proof}

The following theorem is an affirmative answer to {\bf Q8} for every Riordan involution $R$ in the $(-1)$-Appell subgroup. In particular, for each positive integer $n=1, 2, \ldots$, the matrix $B^n$, which is a Riordan matrix in the Appell subgroup and satisfies the condition $R=B^nDB^{-n}$, can be directly obtained from the Riordan involution $R$.
\begin{thm}
\label{ShapiroRinv}
Let $R=(g(x), -x)$ be a Riordan involution such that the diagonal entries of $RD$ are positive, where $D=(1, -x)$. Then there exists a Riordan matrix $B^n$ such that
$R=B^nDB^{-n}$, where $B^n=((g(x)+1)^n, x)$ for each $n=1, 2, \ldots$
\end{thm}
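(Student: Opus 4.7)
The plan is to verify $R = B^n D B^{-n}$ by a direct computation with the Riordan product (\ref{b1}), using the involution hypothesis on $R$ as the sole algebraic input.

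First, I would unpack the involution condition into a scalar identity. By (\ref{b1}), $R^2 = (g(x)\,g(-x),\,x)$, so $R^2 = (1,x)$ is equivalent to the functional equation $g(x)\,g(-x) = 1$, i.e.\ $g(-x) = 1/g(x)$. A short rearrangement then gives $g(-x)+1 = (g(x)+1)/g(x)$, so that
\[
\frac{g(x)+1}{g(-x)+1} \;=\; g(x).
\]
This is the only nontrivial identity used in the rest of the proof.

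Second, I would verify that $B^n = ((g(x)+1)^n, x)$ is a genuine Riordan matrix in the Appell subgroup. Since $RD = (g(x), -x)(1, -x) = (g(x), x)$ lies in the Appell subgroup, its diagonal entries are all equal to $g(0)$, so the positivity hypothesis on $RD$ forces $g(0) > 0$; hence $(g(0)+1)^n > 0$, so the constant term of the first coordinate of $B^n$ is nonzero. Lemma \ref{Radd}, combined with the closure of the Appell subgroup under multiplication, then certifies that $B^n$ is Riordan, and its inverse is simply $B^{-n} = ((g(x)+1)^{-n}, x)$.

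Third, I would perform the conjugation in two successive applications of (\ref{b1}). The first product is immediate: $B^n D = ((g(x)+1)^n, -x)$, since the second coordinate of $B^n$ is $x$. Multiplying on the right by $B^{-n}$ substitutes $-x$ into the first coordinate of $B^{-n}$, giving
\[
B^n D B^{-n} \;=\; \left(\, \left(\tfrac{g(x)+1}{g(-x)+1}\right)^n,\; -x \,\right).
\]
Invoking the identity from Step 1 collapses the ratio and matches the first coordinate against $g(x)$ to recover $R$.

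The main obstacle is essentially bookkeeping: tracking which coordinate picks up the substitution $x \mapsto -x$ under the second application of (\ref{b1}), and ensuring the Appell-subgroup inverse is computed correctly. Once that is in hand, the entire theorem rests on the single scalar identity $g(x)g(-x)=1$, and the same short computation is meant to hold uniformly for each $n \geq 1$.
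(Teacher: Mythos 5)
Your setup is fine: $R^2=(1,x)$ does give $g(x)g(-x)=1$, hence $\frac{g(x)+1}{g(-x)+1}=g(x)$; the positivity of the diagonal of $RD=(g(x),x)$ gives $g(0)>0$, so $B^n=((g(x)+1)^n,x)$ is a genuine Appell-subgroup Riordan matrix with $B^{-n}=((g(x)+1)^{-n},x)$; and your two applications of (\ref{b1}) correctly yield
\[
B^nDB^{-n}=\left(\left(\frac{g(x)+1}{g(-x)+1}\right)^{\!n},\,-x\right).
\]
The final step is where the argument breaks: the ratio collapses to $g(x)^n$, not to $g(x)$, so your computation gives $B^nDB^{-n}=(g(x)^n,-x)$, which equals $R=(g(x),-x)$ only when $n=1$ (or $g\equiv 1$). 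As written, your proposal proves the case $n=1$ and nothing more; you cannot ``match the first coordinate against $g(x)$'' for $n\geq 2$.

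Pushed to its honest conclusion, your computation in fact exposes a problem with the statement rather than merely with your write-up: for $g(x)=\frac{1+x}{1-x}$ one has $g(x)g(-x)=1$ and $g(0)=1>0$, yet $((g(x)+1)^2,x)\,D\,((g(x)+1)^{-2},x)=(g(x)^2,-x)\neq R$. The paper's own proof takes a different route: it interleaves the even-indexed columns of $(U+D)^n$ with the odd-indexed columns of $(U-D)^n$, where $U=RD$, uses Lemma \ref{Rinv} to get $RB^n=B^nD$, and then invokes Lemma \ref{Radd} to identify the interleaved matrix with $(U+I)^n=((g(x)+1)^n,x)$. That identification is valid for $n=1$, where the interleaved matrix is exactly $U+I$, but not for $n\geq 2$: for $n=2$ the interleaved matrix is $(g(x)^2+g(x)+g(-x)+1,\,x)$, which differs from $((g(x)+1)^2,x)$ unless $g$ is even, and evenness together with $g(x)g(-x)=1$ forces $g\equiv 1$. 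So the existential conclusion (some Appell-subgroup $B$, depending on $n$, with $R=BDB^{-1}$) survives via the paper's interleaving construction, but the explicit formula $B^n=((g(x)+1)^n,x)$ does not, and your direct computation---once the exponent is tracked correctly---is precisely what shows this.
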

\begin{proof}
If $R=(g(x), -x)$ is a Riordan involution, then it follows from Lemma \ref{Rinv} that for each $n=1, 2, \ldots$, we have $R(U+D)^n=(U+D)^n$ and $R(U-D)^n=-(U-D)^n$,
where $U=RD$. Let $(U+D)^n=[{\mathbf x}_0^n, {\mathbf x}_1^n, {\mathbf x}_2^n, \ldots]$ and
$(U-D)^n=[{\mathbf y}_0^n, {\mathbf y}_1^n, {\mathbf y}_2^n, \ldots]$. Then for each $n=1, 2, \ldots$, $$R[{\mathbf x}_0^n, {\mathbf y}_1^n, {\mathbf x}_2^n, {\mathbf y}_3^n, \ldots]=[{\mathbf x}_0^n, -{\mathbf y}_1^n, {\mathbf x}_2^n, -{\mathbf y}_3^n, \ldots]=[{\mathbf x}_0^n, {\mathbf y}_1^n, {\mathbf x}_2^n, {\mathbf y}_3^n, \ldots]D,$$
which can be represented by $RB^n=B^nD$ where $B^n=[{\mathbf x}_0^n, {\mathbf y}_1^n, {\mathbf x}_2^n, {\mathbf y}_3^n, \ldots]$.
In fact, it follows from Lemma \ref{Radd} that $U+I$ is a Riordan matrix with $U+I=(g(x)+1, x)$. Hence for each $n=1, 2, \ldots$, there exists a Riordan matrix $B^n$ such that
$R=B^nDB^{-n}$ where $B^n=(U+I)^n=((g(x)+1)^n, x)$, and the proof is complete.
\end{proof}

Claimed next is an answer to {\bf Q8.1}, namely, that for a Riordan involution $R$ in the $(-1)$-Appell subgroup,
$B^n (n=1, 2, \ldots)$ has a related combinatorial significance.
\begin{thm}
\label{ShapiroRinv1}
Let $R=(g(x), -x)$ be a Riordan involution such that the diagonal entries of $RD$ are positive where $D=(1, -x)$. Then there exists a Riordan matrix $B^n$
such that the columns of $B^n$ for $n=1, 2, \ldots$ are $R$-invariant or inverse $R$-invariant sequences of the first kind.
\end{thm}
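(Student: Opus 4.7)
The plan is to establish the claim as an essentially direct consequence of Lemma~\ref{Rinv} together with the explicit column-by-column realization of $B^n$ that is embedded in the proof of Theorem~\ref{ShapiroRinv}. No new computation beyond reading off that decomposition is really required; this is why the result sits naturally as a companion to Theorem~\ref{ShapiroRinv}.

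With $U = RD$, I first recall from the proof of Theorem~\ref{ShapiroRinv} that the Riordan matrix $B^n$ admits the column-wise description
\[
B^n \;=\; [\mathbf{x}_0^n,\, \mathbf{y}_1^n,\, \mathbf{x}_2^n,\, \mathbf{y}_3^n,\, \ldots],
\]
where $\mathbf{x}_i^n$ denotes the $i$-th column of $(U+D)^n$ and $\mathbf{y}_i^n$ denotes the $i$-th column of $(U-D)^n$. Because $R$ is a Riordan involution, Lemma~\ref{Rinv}(a) instantly certifies that each $\mathbf{x}_i^n$ is an $R$-invariant sequence of the first kind, while Lemma~\ref{Rinv}(b) instantly certifies that each $\mathbf{y}_i^n$ is an inverse $R$-invariant sequence of the first kind. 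Combining these two observations finishes the argument: the even-indexed columns $\mathbf{x}_0^n,\mathbf{x}_2^n,\mathbf{x}_4^n,\ldots$ of $B^n$ are $R$-invariant sequences of the first kind, and the odd-indexed columns $\mathbf{y}_1^n,\mathbf{y}_3^n,\mathbf{y}_5^n,\ldots$ are inverse $R$-invariant sequences of the first kind, so every column of $B^n$ lies in one of the two advertised categories.

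The positivity hypothesis on the diagonal of $RD$ plays no role in the combinatorial classification itself; it enters only through Theorem~\ref{ShapiroRinv}, where it guarantees that $g_0+1\neq 0$ and hence that the resulting $B^n$ is a bona fide Riordan matrix in the Appell subgroup, so that reading off its ``columns'' as sequences in $\mathbb{R}^\infty$ is legitimate. The only place a genuine obstacle could conceivably arise is in verifying that the column-wise concatenation $[\mathbf{x}_0^n,\mathbf{y}_1^n,\mathbf{x}_2^n,\ldots]$ really does agree with the matrix $B^n$ provided by Theorem~\ref{ShapiroRinv}; but that matching is precisely what the identity $RB^n = B^n D$ in the proof of Theorem~\ref{ShapiroRinv} already records, so nothing new needs to be verified here.
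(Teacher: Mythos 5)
Your proposal is correct and follows essentially the same route as the paper, whose proof of Theorem~\ref{ShapiroRinv1} simply states that it follows directly from Lemma~\ref{Rinv} and Theorem~\ref{ShapiroRinv}; you merely spell out that the even-indexed columns of $B^n$ come from $(U+D)^n$ and the odd-indexed ones from $(U-D)^n$, so Lemma~\ref{Rinv}(a) and (b) classify them as $R$-invariant and inverse $R$-invariant sequences of the first kind, respectively. Your remarks on where the positivity hypothesis enters (only via Lemma~\ref{Radd} inside Theorem~\ref{ShapiroRinv}) are consistent with the paper.
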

\begin{proof}
It directly follows from Lemma \ref{Rinv} and Theorem \ref{ShapiroRinv}.
\end{proof}
\begin{rmk}
\label{ShiftedR}
{\rm
For each Riordan matrix $R$, let us refer to $R+I$ as the shifted Riordan matrix of $R$ by $I$. In Shapiro's open questions {\bf Q8} and {\bf Q8.1}, if the condition on
$B$ is replaced by integral powers of a shifted Riordan matrix by $I$, one gets more meaningful answers by applying Theorem \ref{ShapiroRinv}. This is
evident in the following theorem. }
\end{rmk}
\begin{thm}
\label{ShapiroRinv2}
Let $R$ be a Riordan (resp., pseudo) involution and let $B$ be a shifted Riordan matrix of $RD$ (resp., $R$ ) by $I$. Then for each $n=1, 2, \ldots$,
$R=B^nDB^{-n}$ (resp., $RD=B^nDB^{-n}$) such that the columns of $B^n$ are $R$-invariant or inverse $R$-invariant sequences of the first kind.
\end{thm}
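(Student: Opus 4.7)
The plan is to reduce both cases of the theorem to Theorem \ref{ShapiroRinv} and Lemma \ref{Rinv}, with Lemma \ref{Radd} extracting the structural information hidden inside the hypothesis that $B$ is a shifted Riordan matrix.

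First I would dispose of the Riordan-involution case. Writing $R=(g(x),f(x))$ and applying the Riordan product rule gives $RD=(g(x),-f(x))$; since $B=RD+I$ is postulated to be Riordan, Lemma \ref{Radd} forces $-f(x)=x$, so $R=(g(x),-x)$ lies in the $(-1)$-Appell subgroup and the involution condition $R^2=I$ yields $g(x)g(-x)=1$. This puts us in precisely the setting of Theorem \ref{ShapiroRinv}, which manufactures $B^n=((g(x)+1)^n,x)=(RD+I)^n$ satisfying $R=B^nDB^{-n}$. The invariance claim for the columns of $B^n$ then follows from the construction used in the proof of Theorem \ref{ShapiroRinv}: those columns are the alternating columns of $(RD+D)^n$ and $(RD-D)^n$, which by Lemma \ref{Rinv}(a) and (b) are $R$-invariant and inverse $R$-invariant sequences of the first kind, respectively.

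The pseudo-involution case I would reduce to the involution case just treated. Here $B=R+I$ being Riordan forces, via Lemma \ref{Radd}, that $R=(g(x),x)$ belongs to the Appell subgroup; the pseudo-involution identity $(RD)^2=I$ then specializes to $g(x)g(-x)=1$, so $RD=(g(x),-x)$ is itself a genuine Riordan involution in the $(-1)$-Appell subgroup. The one subtle move, and the only place anything needs checking, is to apply Theorem \ref{ShapiroRinv} to $RD$ rather than to $R$: since $(RD)\cdot D=R$, the Riordan matrix produced by that theorem is $((RD)D+I)^n=(R+I)^n=B^n$, and its conclusion reads $RD=B^nDB^{-n}$, exactly as desired. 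The column-invariance claim is then immediate from Lemma \ref{Rinv}(a) and (b) in their pseudo-involution form: the columns of $(R+D)^n$ and $(R-D)^n$, whose alternating interleaving is $B^n$, are $R$-invariant and inverse $R$-invariant sequences of the first kind. Beyond this single identification $(RD)D=R$, the whole argument is pure bookkeeping.
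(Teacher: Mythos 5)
Your argument hinges on reading the hypothesis as saying that $B$ itself is a Riordan matrix, and that is not what the statement says. By Remark \ref{ShiftedR}, ``the shifted Riordan matrix of $RD$ (resp.\ $R$) by $I$'' is simply the matrix $B=RD+I$ (resp.\ $B=R+I$); no Riordanness of $B$ is assumed, and $R$ is an arbitrary Riordan (pseudo) involution. Your appeal to Lemma \ref{Radd} therefore has nothing to act on (and, even on your reading, Lemma \ref{Radd} and Theorem \ref{ShapiroRinv} carry positivity hypotheses that Theorem \ref{ShapiroRinv2} does not supply). Its effect is to confine $R$ to the $(-1)$-Appell (resp.\ Appell) subgroup, so you only recover Theorem \ref{ShapiroRinv} and its conjugate by $D$ --- the case already settled --- not the stated theorem. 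The paper's own illustration immediately after the statement makes the point: there $R=((1+xC(x))C(x),\,x(1+xC(x))C(x))$ is a Bell-type pseudo involution with $f(x)\neq x$, so by Lemma \ref{Radd} the matrix $B=R+I$ is \emph{not} Riordan, and your proof is silent about precisely the situation the theorem is meant to cover. The route the paper intends (see Remark \ref{ShiftedR}) does not pass through Lemma \ref{Radd} at all: one sets $U=RD$ (resp.\ $U=R$), uses Lemma \ref{Rinv} to see that the columns of $(U+D)^n$ and $(U-D)^n$ are $R$-invariant, resp.\ inverse $R$-invariant, sequences of the first kind, and interleaves those columns exactly as in the proof of Theorem \ref{ShapiroRinv} to obtain a matrix $B_n$ with $RB_n=B_nD$ (resp.\ $RD\,B_n=B_nD$).

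Be aware that the restriction you introduced is doing real mathematical work, so the gap is not bookkeeping. The step in Theorem \ref{ShapiroRinv} that identifies the interleaved matrix with the honest power $(U+I)^n$ uses the Appell form of $U$ (the $j$th column of $U\pm D$ has generating function $(g(x)\pm(-1)^j)x^j$), and that identification is unavailable for a general $R$. Moreover, for any (pseudo) involution one has $RB=BD$ (resp.\ $RD\,B=BD$) directly from $R^2=(1,x)$ (resp.\ $RDR=D$); since the conclusion presupposes $B$ invertible, $R=B^2DB^{-2}$ would then be equivalent to $DB=BD$, i.e.\ to $DRD=R$ (resp.\ $DR=RD$), a checkerboard condition on the entries that already fails for the paper's illustrating example (its $(1,0)$ entry is $2\neq 0$). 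So for $n\geq 2$ the asserted identity cannot be reached by eliminating the non-Appell case as you do, and any correct treatment must engage the general $R$ (e.g.\ via the interleaving construction of Lemma \ref{Rinv}) rather than reduce it away.
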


To illustrate the above theorem,
let $R=((1+xC(x))C(x), x(1+xC(x))C(x))$, where $C(x)={{1-\sqrt{1-4x}}\over 2x}$. Then $R$ is a pseudo involution \cite{Cheon1} with
\[
      R=\left({{1-x-\sqrt{1-4x}}\over x}, 1-x-\sqrt{1-4x}\right)=\left[\begin{array}{ccccccccc}
           ~ 1 & 0 & 0 & 0 & 0  & 0 &  \cdots~\\
            ~2 & 1 & 0 & 0 & 0  & 0 & \cdots~\\
            ~4 & 4 & 1 & 0 & 0  & 0 & \cdots~\\
           ~10 &12 & 6 & 1 & 0  & 0 &  \cdots~\\
           ~28 &36 &24 & 8 & 1  & 0 & \cdots~\\
           ~84 &112 &96 &40 & 10  & 1 & \cdots~\\
       ~\vdots & \vdots & \vdots & \vdots &\vdots & \vdots& \ddots~
                   \end{array}\right].
\]
From the previous results, it can be easily derived that for each $n=1, 2, \ldots$, $RD=B^nD{B^{-n}}$ and the columns
of $B^n$ are $R$-invariant or inverse $R$-invariant sequences of the first kind, where $B=R+I$.
\vskip 0.5cm
\section*{References}
\bibliography{mybibfile}

\end{document}